\newtheorem{theorem}{Theorem}[section]
\newtheorem{corollary}[theorem]{Corollary}
\newtheorem{lemma}[theorem]{Lemma}
\theoremstyle{definition}
\newtheorem{definition}[theorem]{Definition}
\theoremstyle{remark}
\newtheorem{remark}[theorem]{Remark}
\numberwithin{equation}{section}
\newcommand{\II}{I\negmedspace I}
\begin{document}

\allowdisplaybreaks

\title[Mean curvature in manifolds with Ricci curvature bounded from below]
{Mean curvature in manifolds with Ricci curvature bounded from below}

\author[J. CHOE]{JAIGYOUNG CHOE}
\address{Jaigyoung Choe: Korea Institute for Advanced Study, Seoul, 02455, Korea}
\email{choe@kias.re.kr}

\author[A. Fraser]{Ailana Fraser}
\address{Ailana Fraser: Department of Mathematics, University of British Columbia, 121-1984 Mathematics Road,
Vancouver, BC V6T 1Z2, Canada}
\email{afraser@math.ubc.ca}

\thanks{J.C. supported in part by NRF 2011-0030044, SRC-GAIA, A.F. supported in part by NSERC}

\date{}

\begin{abstract}
 Let $M$ be a compact Riemannian manifold of nonnegative Ricci curvature and $\Sigma$ a compact embedded 2-sided minimal hypersurface in $M$. It is proved that there is a dichotomy: If $\Sigma$ does not separate $M$ then $\Sigma$ is totally geodesic and $M\setminus\Sigma$ is isometric to the Riemannian product $\Sigma\times(a,b)$, and if $\Sigma$ separates $M$ then the map $i_*:\pi_1(\Sigma)\rightarrow \pi_1(M)$ induced by inclusion is surjective. This surjectivity is also proved for  a compact 2-sided hypersurface with mean curvature  $H\geq(n-1)\sqrt{k}$ in a manifold of Ricci curvature $Ric_M\geq-(n-1)k,k>0$, and for a free boundary minimal hypersurface in a manifold of nonnegative Ricci curvature with nonempty strictly convex boundary. As an application it is shown that a compact $n$-dimensional manifold $N$ with the number of generators of $\pi_1(N)<n$ cannot be minimally embedded in the flat torus $T^{n+1}$.
\end{abstract}

\maketitle

\section{introduction}

Euclid's fifth postulate implies that there exist two nonintersecting lines on a plane. But the same is not true on a sphere, a non-Euclidean plane. Hadamard \cite{Ha} generalized this to prove that every geodesic  must meet every closed geodesic on a surface of positive curvature. Note that an $n$-dimensional minimal submanifold of a Riemannian manifold $M^k$ is a critical set for $n$-dimensional area. Replacing the geodesic with the minimal submanifold, Frankel \cite{F} further generalized Hadamard's theorem: Let $\Sigma_1$ and $\Sigma_2$ be immersed minimal hypersurfaces in a complete connected Riemannian manifold $M^{n+1}$ of positive Ricci curvature. If $\Sigma_1$ is compact, then $\Sigma_1$ and $\Sigma_2$ must intersect. It should be remarked that a manifold of nonnegative Ricci curvature like $\mathbb S^2\times\mathbb S^1$  has many disjoint minimal spheres.

Using the connectivity of the inverse image of $\Sigma_1$ under the projection map in the universal cover of $M$, Frankel also proved that the natural homomorphism of fundamental groups: $\pi_1(\Sigma_1)\rightarrow\pi_1(M)$ is surjective. This means that the minimality of $\Sigma_1$ imposes restrictions on $\pi_1(\Sigma_1)$. This reminds us of a similar restriction on $\pi_1(M)$, as proved by Myers \cite{M}, that if $M$ has positive Ricci curvature, then $\pi_1(M)$ is finite.

These two theorems of Frankel have the dual versions in the negatively curved case as follows:
If $M^n$ is a complete Riemannian manifold of nonpositive sectional curvature, then every compact immersed minimal submanifold $\Sigma^k$ must have an infinite fundamental group and moreover, if $\Sigma^k$ is totally geodesic, then $\pi_1(\Sigma^k)\rightarrow\pi_1(M^n)$ is 1-1 \cite{He}.

It was Lawson \cite{L} who first realized the topological implication of Frankel's theorem; he found that Frankel's proof of the surjectivity works also for each component of $M\setminus\Sigma$ if  $M$ is a compact connected orientable Riemannian manifold of positive Ricci curvature and $\Sigma$ is a compact embedded orientable minimal hypersurface. He then showed that $\pi_1(\bar{D}_j,\Sigma)=0,j=1,2$, where $M\setminus\Sigma=D_1\cup D_2$. This implies that $\Sigma$ has as many 1-dimensional holes (loops) as $D_j$ does. Hence $D_1$ and $D_2$ are handlebodies and since $M$ is diffeomorphic to $\mathbb S^3$,  $\Sigma$ is unknotted.

Recently Petersen and Wilhelm \cite{PW} gave a new proof of Frankel's generalized Hadamard theorem. They also showed that if $M$ has nonnegative Ricci curvature and has two nonintersecting minimal hypersurfaces, then these are totally geodesic and a rigidity phenomenon occurs. Whereas Frankel and Lawson used the second variation formula for arc length, Petersen and Wilhelm   utilized the superhamonicity of the distance function from a minimal hypersurface. It should be mentioned that Cheeger and Gromoll had used the superharmonicity of the distance function arising from a minimizing geodesic \cite{CG}. See also \cite{CK}, \cite{Wy}.

In this paper we show that there is a dichotomy for a compact Riemannian manifold of nonnegative Ricci curvature (Theorem \ref{heegaard}): A compact embedded 2-sided minimal hypersurface $\Sigma$ does not separate $M$ or separates $M$ into two nonempty components $D_1$ and $D_2$, and consequently, $\Sigma$ is totally geodesic and $M$ is isometric to a mapping torus or the map $i_*:\pi_1(\Sigma)\rightarrow \pi_1(\bar{D}_j),j=1,2,$ induced by inclusion is surjective. As a result $M$ cannot have more 1-dimensional holes than $\Sigma$ unless $M$ is diffeomorphic to $\Sigma\times\mathbb S^1$. The first part of Theorem \ref{heegaard} reminds us of the Cheeger-Gromoll splitting theorem \cite{CG}; their {\it line} is dual to our nonseparating minimal hypersurface. See also \cite{CaGa}.

The surjectivity of $i_*:\pi_1(\Sigma)\rightarrow \pi_1(\bar{D}_j)$ is obtained in more general settings as follows. Let $M^{n}$ be a Riemannian manifold  of Ricci curvature $Ric_M\geq -(n-1)k, \, k>0$ and let $\Sigma$ be a compact 2-sided hypersurface that bounds a connected region $\Omega$ in $M$. If $\Omega$ is mean convex with $H(\Sigma)\geq(n-1)\sqrt{k}$, then $\Sigma$ is connected and $i_*:\pi_1(\Sigma)\rightarrow \pi_1(\bar{\Omega}_i)$ is surjective. Thus if $n=3$ then $\Omega$ is a handlebody.

We also consider the case when the compact Riemannian manifold $M^n$ of nonnegative Ricci curvature has nonempty boundary $\partial M$ which is strictly convex with respect to the inward unit normal. Fraser and Li \cite{FL} showed that any two properly embedded orientable minimal hypersurfaces in $M$ meeting $\partial M$ orthogonally must intersect. They also showed that if $\Sigma$ is a properly embedded orientable minimal hypersurface in $M$ meeting $\partial M$ orthogonally, then $\Sigma$ divides $M$ into two connected components $D_1$ and $D_2$. Generalizing \cite{FL}, we show that the maps $i_*:\pi_1(\Sigma)\rightarrow \pi_1(\bar{M})$ and $i_*:\pi_1(\Sigma)\rightarrow \pi_1(\bar{D_j})$, $j=1,\,2$, are surjective and both components of $M\setminus\Sigma$ are handlebodies. When $n=3$ it is shown that $\Sigma$ is unknotted. We also prove some corresponding results in the case where $Ric_M\geq -(n-1)k, \, k>0$.

Finally, from our dichotomy (Theorem \ref{heegaard}) we derive nonexistence of some minimal embeddings. Let $N$ be an $n$-dimensional compact manifold with the number of generators of $\pi_1(N)=k$ that is minimally embedded in the flat $(n+1)$-torus $T^{n+1}$. Then we must have $k\geq n$. If $k=n$, then $N\approx T^n$, and if $k>n$, then $T^{n+1}\setminus N$ has two components $D_1,D_2$ such that the number of generators of $\pi_1(D_j)$ is bigger than $n,\,j=1,2$.  This is a higher dimensional generalization of Meeks' theorem \cite{Mk} that a compact surface of genus 2 cannot be minimally immersed in $T^3$.

\section{surjectivity}

It is well known that the second variation of arc length involves negative the integral of the sectional curvature. It is for this reason that the Ricci curvature affects both the mean curvature of the level surfaces of the distance function and the Laplacian of the distance function. The following lemma verifies this influence.

\begin{lemma} \label{lemma:levelsurface}
Assume that $M^{n+1}$ is a complete Riemannian manifold of nonnegative Ricci curvature. Let $D$ be a domain in $M$ and $N\subset\partial D$ a hypersurface with mean curvature $H_N \geq c$ with respect to the inward unit normal $\nu$ to $N$, i.e., $H_N=\langle \vec{H}_N,\nu\rangle$. Suppose that the distance function $d$ from $N$ is well defined in $D$. Then at a point $q \in D$ where $d$ is smooth\\
{\rm a)} the level surface of $d$ through $q$ has mean curvature $\geq c$ with respect to the unit normal away from $N${\rm (}in fact, that mean curvature is monotone nondecreasing in $d${\rm )}; \\
{\rm b)} $\Delta d\leq -c;$\\
{\rm c)} the level surfaces of $d$ are piecewise smooth, and at a nonsmooth point where two smooth level surfaces intersect, they make an angle $<\pi$ in the direction away from $N$.
\end{lemma}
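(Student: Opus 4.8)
The plan is to prove all three parts by the standard matrix (Riccati) comparison for the distance function along the normal geodesics issuing from $N$, the hypothesis $Ric_M\geq 0$ entering exactly as in the Laplacian comparison theorem.

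\emph{Setup.} First I would fix $q\in D$ at which $d$ is smooth and take the (necessarily unique) unit-speed minimizing geodesic $\gamma:[0,d(q)]\to M$ from $N$ to $q$. By the first variation formula $\gamma$ meets $N$ orthogonally: $\gamma(0)=p\in N$, $\gamma'(0)=\nu(p)$, $\gamma(d(q))=q$. Smoothness of $d$ at $q$ forces $\gamma$ to be the only minimizer and to carry no focal point of $N$ on $(0,d(q)]$, so $d$ is smooth near $\gamma((0,d(q)])$, there $\nabla d(\gamma(t))=\gamma'(t)$, and each level set $\Sigma_t=\{d=t\}$ is a smooth hypersurface near $\gamma(t)$ with unit normal $\nabla d$ pointing away from $N$.

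\emph{Parts (a) and (b).} Let $A(t)$ denote the restriction of $\nabla^2 d$ to $T_{\gamma(t)}\Sigma_t$. Since $|\nabla d|\equiv 1$, $\nabla^2 d(\nabla d,\cdot)=0$, hence $\operatorname{tr}A(t)=\Delta d(\gamma(t))$ and the mean curvature of $\Sigma_t$ with respect to $\nabla d$ equals $-\operatorname{tr}A(t)=-\Delta d$. Along $\gamma$ the Hessian obeys the Riccati equation
\[
(\nabla^2 d)' + (\nabla^2 d)^2 + R(\cdot,\gamma')\gamma' = 0 ,
\]
with $'=\nabla_{\gamma'}$; restricting to $T\Sigma_t$ and tracing gives
\[
(\Delta d)' + |A|^2 + Ric_M(\gamma',\gamma') = 0 .
\]
Then, with $u(t):=\Delta d(\gamma(t))$, Cauchy--Schwarz ($|A|^2\geq(\operatorname{tr}A)^2/n$) and $Ric_M\geq 0$ give $u'\leq-\tfrac1n u^2\leq 0$, while at $t=0$ one computes $u(0)=\operatorname{tr}_{T_pN}\nabla^2 d=\sum_i\langle\nabla_{e_i}\nu,e_i\rangle=-\langle\vec H_N,\nu\rangle=-H_N\leq -c$ (using $\nabla d|_N=\nu$). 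Since $u$ is nonincreasing, $u(t)\leq u(0)\leq -c$ on $[0,d(q)]$; at $t=d(q)$ this is (b), and since $-u(t)$ is precisely the mean curvature of $\Sigma_t$ away from $N$, it is $\geq H_N\geq c$ and nondecreasing along $\gamma$, which is (a).

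\emph{Part (c) and the main obstacle.} Finally, (c) rests on the local structure of the distance function to a smooth submanifold: off a closed set $d$ is smooth, and near a nonsmooth point finitely many normal geodesics from $N$ realize the distance, each extending to a smooth $d_i$ near the point with $d=\min_i d_i$ locally; hence $\Sigma_t$ is locally the finite union of the smooth pieces $\{d_i=t\}$ (so piecewise smooth) and $\{d\geq t\}=\bigcap_i\{d_i\geq t\}$ locally. If two such pieces $\{d_1=t\}$, $\{d_2=t\}$ meet at $x_0$, the corresponding minimizers $\gamma_1\neq\gamma_2$ must arrive with $\gamma_1'\neq\gamma_2'$ (else the geodesic ODE would force $\gamma_1=\gamma_2$), so $\nabla d_1(x_0)$, $\nabla d_2(x_0)$ are distinct unit vectors, the two hypersurfaces are transverse, and the side away from $N$ is contained in $\{d_1>t\}\cap\{d_2>t\}$, a wedge of opening angle $\pi$ minus the angle between $\gamma_1'$ and $\gamma_2'$, hence $<\pi$. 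The hard part here is marshalling the structure theory of the distance function (semiconcavity and the representation as a minimum of smooth functions off focal directions) cleanly enough to justify piecewise smoothness and the transversality; parts (a) and (b) are routine once the sign conventions relating $\Delta d$, the level-set mean curvature, and $H_N$ are pinned down.
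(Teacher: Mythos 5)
Your argument is correct, but for parts (a) and (b) it takes a genuinely different route from the paper's. The paper works with the second variation of arc length along the minimizing geodesic $\gamma$ from $N$ to $q$: it plugs in parallel fields tangent to $N$ and to the level surface $S$, sums over an orthonormal basis, and reads off the \emph{integrated} inequality $H_S(q)\geq H_N(p)+\int_\gamma \mathrm{Ric}(\gamma',\gamma')$, from which (a) and (b) follow once one observes $\Delta d = -H_S$. You instead package the same information \emph{differentially}, via the Riccati equation for $\nabla^2 d$ along the normal geodesic, trace it, and use $\mathrm{Ric}\geq 0$ to see that $u=\Delta d$ is nonincreasing with $u(0)=-H_N\leq -c$. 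The two computations are of course dual (the Riccati equation is the differentiated second variation), but your version makes the monotonicity statement in (a) fall out immediately from $u'\leq 0$, whereas the paper deduces it by re-running its inequality between two level sets. The paper's version has the advantage of making visible exactly how the lower Ricci bound integrates along the geodesic, which is why it transfers with no change to the negative-curvature setting of Theorem \ref{theorem:negative}; your Riccati version would need the hyperbolic comparison $u'\leq -u^2/n + (n)k$ there instead. One small caution in your write-up: the sign bookkeeping $u(0)=-H_N$ depends on the paper's convention $\II(u,v)=\langle\nabla_u v,\nu\rangle$ and $H_N=\langle\vec H_N,\nu\rangle$ with $\nu$ the inward normal; you did check this, and it is consistent, but it is worth stating explicitly since reversing either convention flips the inequality. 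Part (c) is handled in essentially the same way as the paper (locally $d=\min_i d_i$, super-level set is an intersection, distinct arriving velocities force transversality); both treatments implicitly restrict to the non-focal nonsmooth points, which is all that is used later.
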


\begin{proof}
Let $S$ be a smooth level surface of $d$ through a point $q\in D$. Let $\gamma\subset D$ be the geodesic up to $q$ that realizes the distance from $N$ and is parametrized by arc length. Then $\gamma$ hits $S$ and $N$ orthogonally at $q$ and at a point $p\in N$. Choose any unit vector $v$ tangent to $N$ at $p$ and parallel translate it along $\gamma$ to $q$, obtaining a unit parallel vector field $V$ along $\gamma$ which is normal to $\gamma$ and tangent to $S$ at $q$. Consider the lengths of the curves obtained by moving $\gamma$ in the direction of $V$. Then the second variation formula and the assumption that $S$ is a level surface of $d$ give us
$$L_V''={\II}_S(V,V)-{\II}_N(V,V)-\int_\gamma K(V,\gamma')\geq0,$$
where ${\II}$ denotes the second fundamental form defined by ${\II}(u,v)=\langle \nabla_u v,\nu \rangle$ with respect to the inward unit normal $\nu$ away from $N$, and $K(V,\gamma')$ is the sectional curvature on the span of $V$ and $\gamma'$. We can compute the same for orthonormal vectors $v_1,\ldots,v_n$ spanning the tangent space to $N$ at $p$ and sum up the above inequalities for the corresponding orthonormal parallel vector fields $V_1,\ldots,V_n$, to get
$$H_S(q)-H_N(p)-\int_\gamma {\rm Ric}(\gamma',\gamma')\geq0,$$
which proves a) because $\int_\gamma {\rm Ric}(\gamma',\gamma')$ is monotone nondecreasing in $d$. Let $E_1,\ldots,E_n$ be orthonormal vector fields on $S$ in a neighborhood of $q$. Extend them to orthonormal vector fields $\bar{E}_1,\ldots,\bar{E}_n,\bar{E}_{n+1}$ on $M$ in a neighborhood of $q$ such that $\bar{E}_{n+1}=\gamma'$. Then at $q$
\begin{equation*}\label{laplacian}
\Delta d=\sum_{i=1}^{n+1}\left[\bar{E}_i\bar{E}_i(d)-(\nabla_{\bar{E}_i}\bar{E}_i)d\right]=-H_S(q)\leq -H_N(p) \leq -c.
\end{equation*}
This proves b).

To prove c) suppose there are two distinct points $p_1$ and $p_2$ in $N$ with ${\rm dist}(p_i,q)=d(q)$. Let $N_i\subset N$ be a small neighborhood of $p_i$, $i=1,2$. Then $d_i(\cdot):={\rm dist}(\cdot,N_i)$ is a well defined function which is smooth in a neighborhood of $q$. Furthermore, it is easy to see that $d(\cdot)={\rm min}\{d_1(\cdot),d_2(\cdot)\}$. Therefore in a neighborhood of $q$ $$d^{-1}\{r\geq d(q)\}=d_1^{-1}\{r\geq d(q)\}\,\cap\, d_2^{-1}\{r\geq d(q)\}.$$
The proof is complete.
\end{proof}

It follows from Lemma \ref{lemma:levelsurface} that the distance function from a minimal hypersurface in a manifold of nonnegative Ricci curvature is superharmonic at points where it is smooth. In the following lemma we show that the distance function is {\em superharmonic in the barrier sense} at points where it is not smooth, and hence satisfies the maximum principle (\cite{C}, \cite{Pe} Theorem 66), that is, it is constant in a neighborhood of every local minimum.

\begin{lemma} \label{lemma:distance}
Let $\Sigma$ be a minimal hypersurface in a complete Riemannian manifold $M$ of nonnegative Ricci curvature. Then the distance function $d$ from $\Sigma$ is superharmonic $\Delta d \leq 0$ in the barrier sense. That is, given $p \in M$, for every $\varepsilon >0$ there exists a smooth support function from above $d_\varepsilon$ defined in a neighborhood of $p$ such that:
\begin{enumerate}
\item $d_\varepsilon(p)=d(p)$,
\item $d(x) \leq d_\varepsilon(x)$ in some neighborhood of $p$,
\item $\Delta d_\varepsilon (p) \leq \varepsilon$.
\end{enumerate}
\end{lemma}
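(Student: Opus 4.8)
The plan is to build the support function $d_\varepsilon$ by perturbing the distance to a single, well-chosen point of $\Sigma$, so that the superharmonicity estimate from Lemma~\ref{lemma:levelsurface} is preserved up to a controlled error. Fix $p\in M$. If $d$ is already smooth at $p$, then by Lemma~\ref{lemma:levelsurface}b) applied with $c=0$ (since $\Sigma$ is minimal, $H_\Sigma\equiv 0$) we have $\Delta d(p)\le 0$ and we may simply take $d_\varepsilon=d$; so the content is at a point $p$ where $d$ fails to be smooth. Nonsmoothness occurs either because there are several minimizing geodesics from $\Sigma$ to $p$ (the situation analyzed in Lemma~\ref{lemma:levelsurface}c)), or because $p$ is conjugate/focal along the unique minimizing geodesic. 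In both cases the idea is the same: pick one minimizing unit-speed geodesic $\gamma$ from a foot point $x_0\in\Sigma$ to $p$, let $S_\delta$ be the smooth piece of the level set $d^{-1}(d(p)-\delta)$ through the interior point $\gamma(d(p)-\delta)$, and define $d_\varepsilon(x):=\delta+\operatorname{dist}(x,S_\delta)$ for small $\delta>0$.

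First I would verify the three required properties. Property (1), $d_\varepsilon(p)=d(p)$, is immediate since $p$ is at distance $\delta$ from $S_\delta$ along $\gamma$. Property (2), $d(x)\le d_\varepsilon(x)$ near $p$: every point $x$ near $p$ can be joined to $\Sigma$ by first going to the nearest point of $S_\delta$ and then following a minimizing geodesic from $S_\delta$ to $\Sigma$, which has length exactly $\delta$ because $S_\delta$ is a level set of $d$ lying at distance $\delta$ from $\Sigma$ on the interior side; hence $d(x)\le \operatorname{dist}(x,S_\delta)+\delta=d_\varepsilon(x)$. For property (3) I would note that for $\delta>0$ small, $d_\varepsilon$ is smooth in a neighborhood of $p$: the hypersurface $S_\delta$ is smooth near $\gamma(d(p)-\delta)$, and $p$ lies at distance $\delta$ from it along a geodesic that hits $S_\delta$ orthogonally and is free of focal points of $S_\delta$ for $\delta$ small (focal points of the level set $S_\delta$ along $\gamma$ are a positive distance away, since $\gamma|_{[0,d(p)]}$ is minimizing from $\Sigma$ and the first focal point of $S_\delta$ cannot occur before $d(p)$; here one uses that $S_\delta$ and $\Sigma$ share the same normal geodesic $\gamma$). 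Then Lemma~\ref{lemma:levelsurface}b), applied with $N=S_\delta$ and $c=H_{S_\delta}(\gamma(d(p)-\delta))$, gives $\Delta d_\varepsilon(p)\le -H_{S_\delta}(\gamma(d(p)-\delta))$. By the monotonicity in Lemma~\ref{lemma:levelsurface}a), the mean curvature of $S_\delta$ at that point is $\ge H_\Sigma(x_0)=0$, so in fact $\Delta d_\varepsilon(p)\le 0\le\varepsilon$. (Thus one obtains the estimate with no error term at all, which is stronger than needed; keeping the $\varepsilon$ only gives room in case one prefers to work with $S_\delta$ replaced by a slightly-outward smooth perturbation to guarantee genuine smoothness.)

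The main obstacle is the regularity of the competitor hypersurface $S_\delta$ and the claim that the normal exponential map of $S_\delta$ is a diffeomorphism on a neighborhood of $p$. One must argue that for $\delta$ small the set $d^{-1}(d(p)-\delta)$ is a smooth hypersurface in a full neighborhood of the interior point $q_\delta:=\gamma(d(p)-\delta)$ — equivalently, that $q_\delta$ is joined to $\Sigma$ by a \emph{unique} minimizing geodesic with no focal point at $q_\delta$. This is a standard fact: interior points of minimizing geodesics from a closed set are regular points of the distance function, because a second minimizer or a focal point at $q_\delta$ would, by the first-variation/focal-point comparison, force $\gamma$ to fail to minimize slightly beyond $q_\delta$, contradicting that $\gamma$ minimizes all the way to $p$. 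I would cite this (e.g. the cut-locus structure theory, as in the references already invoked) rather than reprove it. With that in hand, the rest is the routine verification above, and the lemma follows; the same construction, with $c=-(n-1)\sqrt{k}$ replaced appropriately, will cover the mean-convex and Ricci-bounded-below variants used later.
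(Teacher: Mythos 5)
There is a genuine gap, concentrated exactly where the lemma is nontrivial: the focal‐point case. Your proposed support surface $S_\delta$ is the parallel level set of $d$ through $\gamma(d(p)-\delta)$, which is regular at that interior point (your ``interior points of minimizers are regular'' observation is correct). But when $p$ is a focal point of $\Sigma$ along $\gamma$, the $\Sigma$-Jacobi field $J$ with $J(l)=0$ witnessing this is, restricted to $[\,l-\delta,\,l\,]$, precisely an $S_\delta$-Jacobi field for the parallel hypersurface $S_\delta$: parallel level sets of $d$ share the same normal Jacobi equation and boundary condition, because they come from the very same normal geodesic variation. Consequently $p$ is also a focal point of $S_\delta$, and $\operatorname{dist}(\cdot,S_\delta)$ is still not smooth at $p$. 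Your statement that ``the first focal point of $S_\delta$ cannot occur before $d(p)$'' is true but not enough: what is needed is that it does not occur \emph{at} $d(p)$, and the level-set construction cannot deliver this. (Think of $\Sigma$ the equator in $\mathbb{S}^n$ with $p$ the north pole: every latitude circle between them has $p$ as a focal point.) Your parenthetical about replacing $S_\delta$ by a ``slightly-outward smooth perturbation'' is pointing at the missing idea, but it is the whole content of the proof and cannot be waved off to an $\varepsilon$ of slack.

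The paper's construction is designed precisely to break this. It perturbs at the $\Sigma$-end rather than the $p$-end, pushing $\Sigma$ toward $p$ by the cut-off bump $\delta\,\varphi(d_\Sigma(\gamma(0),\cdot))\,\nu$ to get $\Sigma_\delta$, and sets $d_\varepsilon=\delta+\operatorname{dist}(\Sigma_\delta,\cdot)$. Two features of this perturbation carry the argument: (i) $\gamma(\delta)$ is the \emph{unique} point of $\Sigma_\delta$ at distance $\delta$ from $\Sigma$, which immediately rules out a second minimizer from $p$ to $\Sigma_\delta$; and (ii) $\gamma(\delta)$ is the strict \emph{maximum} of $d(\Sigma,\cdot)$ on $\Sigma_\delta$, which gives $\frac{d^2}{ds^2}\big|_{s=0}L(\gamma_s)<0$ for the variation of $\gamma|_{[0,\delta]}$ through foot points. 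Combining that strictly negative piece with the zero second variation from the focal Jacobi field on $[\,\delta,\,l\,]$ produces a broken variation field $V$ along all of $\gamma$ whose second variation of length is strictly negative, contradicting that $\gamma$ minimizes from $\Sigma$ to $p$. This is the step that a parallel level set cannot supply: for $S_\delta$ the analogous second derivative is exactly zero (all points of a level set are equidistant from $\Sigma$), so one obtains no contradiction. A secondary, likely typographical, issue: with $S_\delta=d^{-1}(d(p)-\delta)$ your support function should read $d_\varepsilon(x)=(d(p)-\delta)+\operatorname{dist}(x,S_\delta)$, not $\delta+\operatorname{dist}(x,S_\delta)$, else property (1) fails; but the substantive gap is the focal-point case above.
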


\begin{proof}
By Lemma \ref{lemma:levelsurface} b) we know that $\Delta d \leq 0$ whenever $d$ is smooth. For any other $p \in M$ choose a unit speed minimizing geodesic $\gamma: [0,l] \rightarrow M$  between $\Sigma$ and $p$, with $\gamma(0) \in \Sigma$ and $\gamma(l)=p$.
Let $\nu$ be the unit normal of $\Sigma$ near $\gamma(0)$ in the direction toward $p$,
let $\varphi(t)=e^{-{t^2}/{(1-t^2)}}$ be a smooth cut-off function, and define
\[
      \Sigma_\delta = \{ \exp_x \delta \varphi( d_\Sigma(\gamma(0), x))  \nu(x) \; : \;
      x \in \Sigma \cap B_r (\gamma(0)) \}
\]
for small $\delta >0$, $r>0$. Since $\Sigma_\delta$ is a small perturbation of $\Sigma$, we have $|H_{\Sigma_\delta}|\leq C(\delta)$ with $C(\delta) \rightarrow 0$ as $\delta \rightarrow 0$. Given $\varepsilon >0$,  choose $\delta=\delta(\varepsilon)$ sufficiently small so that $C(\delta) \leq \varepsilon$. We claim that $d_\varepsilon(\cdot) := \delta(\varepsilon)+ d(\Sigma_{\delta(\varepsilon)}, \cdot)$ is a smooth support function from above for $d$ at $p$. It is clear from the construction that $d_\varepsilon(p)=d(p)$. If $x$ is sufficiently close to $p$, there is an interior point $x'$ in $\Sigma_\delta$ that realizes the distance from $x$ to $\Sigma_\delta$. By the construction of $\Sigma_\delta$, $d(\Sigma,x')\leq\delta$, and we have
\[
      d(x)=d(\Sigma,x) \leq d(\Sigma,x') + d(x',x)
             \leq \delta + d(\Sigma_\delta,x) = d_\varepsilon(x).
\]
If $d_\varepsilon$ is smooth at $p$, then by Lemma \ref{lemma:levelsurface} b), $\Delta d_\varepsilon(p) \leq C(\delta)\leq\varepsilon$.
It remains to show smoothness. Suppose $d_\varepsilon$ is not smooth at $p$. Then we know that either
\begin{enumerate}
\item there are two minimizing geodesics from $p$ to $\Sigma_\delta$, or
\item $p$ is a focal point of $\Sigma_\delta$.
\end{enumerate}
In case (1), there is a minimizing geodesic from $p$ to a point $q \neq \gamma(\delta)$ in $\Sigma_\delta$. But by construction of $\Sigma_\delta$, $d(\Sigma, q)<\delta$, and so this implies that
\[
     d(\Sigma,p) \leq d(\Sigma,q)+d(q,p) < \delta +d(\Sigma_\delta,p)=l,
\]
a contradiction.
In case (2), if $p$ is a focal point of $\Sigma_\delta$, there is a Jacobi field $J$ along $\gamma|_{[\delta, l]}$ with $J(\delta)$ tangent to $\Sigma_\delta$ at $\gamma(\delta)$, $J(l)=0$, and  such that $J'(\delta) + S_{\gamma'(\delta)} (J(\delta))$ is orthogonal to $\Sigma_\delta$, where $S_{\gamma'(\delta)}$ is the linear operator on $T_{\gamma(\delta)} \Sigma_\delta$ given by the second fundamental form of $\Sigma_\delta$ in $M$, that is, $S_{\gamma'(\delta)}X=-\left(\nabla_X\gamma'(\delta)\right)^T,X\in T_{\gamma(\delta)}\Sigma_\delta$.
The second variation of length of $\gamma|_{[\delta, l]}$ in the direction $J$ is zero:
\begin{align*}
   I(J,J)&=\int_\delta^l \left[|\nabla_{\gamma'} J|^2  - \langle R(J,\gamma')\gamma',J \rangle\right] \, dt
                    + \langle \nabla_J J, \gamma' \rangle \Big|_\delta^l \\
             &=-\int_\delta^l \langle J''+ R(J,\gamma')\gamma',J \rangle \, dt
                    + [ \langle \nabla_{\gamma'} J, J \rangle + \langle \nabla_J J, \gamma' \rangle ] \Big|_\delta^l \\
             &=-\int_\delta^l \langle 
                      J''+ R(J,\gamma')\gamma',J \rangle \, dt
                   -\langle J'(\delta) + S_{\gamma'(\delta)} (J(\delta)) , J(\delta) \rangle \\
              & =0.
\end{align*}
Let $\sigma$ be the geodesic in $\Sigma_\delta$ with $\sigma(0)=\gamma(\delta)$ and $\sigma'(0)=J(\delta)$. For $\delta$ small, there is a unique minimizing geodesic $\gamma_s$ between $\sigma(s)$ and $\Sigma$, and since  $\gamma(\delta)$ is the point on $\Sigma_\delta$ that is furthest from $\Sigma$,
\[
          \left.\frac{d^2}{ds^2}\right|_{s=0} L(\gamma_s) <0.
\]
 Let $W$ be the variation field of the variation $\gamma_s$ of $\gamma|_{[0,\delta]}$. Then $W(\delta)=J(\delta)$, and for the vector field $V$ along $\gamma$ given by
\begin{align*}
      V(t)=\begin{cases} W(t)  & \mbox{ for } 0 \leq t \leq \delta \\
                                                 J(t)    & \mbox{ for } \delta \leq t \leq l, \end{cases}
\end{align*}
the second variation of length of $\gamma$ is strictly less than zero. This contradicts the fact that $\gamma$ is a minimizing geodesic from $p$ to $\Sigma$.
Therefore, $d_\varepsilon$ is smooth in a neighborhood of $p$ and is a smooth support function from above for $d$ at $p$.
\end{proof}

With the superharmonicity of the  distance function in our hands we are now able to prove the main theorem.

\begin{definition}
Let $\Sigma$ be a compact connected embedded hypersurface in a compact $n$-manifold $M$. $\Sigma$ is said to be {\it separating} if $M\setminus\Sigma$ has two nonempty connected components, and {\it nonseparating} if $M\setminus\Sigma$ is connected.
\end{definition}

\begin{definition}
A {\it handlebody} is a 3-manifold with boundary which is homeomorphic to a closed regular neighborhood of a connected properly embedded 1-dimensional CW complex in $\mathbb R^3$. A surface $\Sigma$ in a 3-manifold $M$ is called a {\it Heegaard surface} if $\Sigma$ separates $M$ into two handlebodies.
\end{definition}

\begin{theorem}\label{heegaard}
Let $M$ be a compact Riemannian n-manifold of nonnegative Ricci curvature and $\Sigma$ a compact connected embedded 2-sided minimal hypersurface in $M$. Then either
\begin{enumerate}
\item[a)]
$\Sigma$ is nonseparating and totally geodesic and $M$ is 
isometric to a mapping torus
\[
          \frac{\Sigma \times [0,a]}{(x,0) \sim (y,a) \mbox{ iff } \phi(x)=y},
\]
where $\phi: \Sigma \rightarrow \Sigma$ is an isometry, or
\item[b)]
$\Sigma$ is separating, and if $D_1,D_2\subset{M}$ are the components of ${M}\setminus{\Sigma}$, then for $j=1,2$ the maps
\[
       i_*: \pi_1(\Sigma) \rightarrow \pi_1(\bar{D}_j),\,\,\,\,i_*: \pi_1(\Sigma) \rightarrow \pi_1(M)\,\,\,\,{ and}\,\,\,\,i_*: \pi_1(D_j)\rightarrow \pi_1(M)
\]
induced by the inclusion are all surjective.
\end{enumerate}
If $n=3$ and $\Sigma$ is a separating, then $\Sigma$ is a Heegaard surface.
\end{theorem}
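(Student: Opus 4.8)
\emph{Reduction and the nonseparating case.} Since $\Sigma$ is connected and $2$-sided, $M\setminus\Sigma$ has exactly one or two components, which is the dichotomy. Assume first $\Sigma$ is nonseparating and cut $M$ along $\Sigma$ to get a compact connected manifold $\widehat M$ with $\partial\widehat M=\Sigma_+\sqcup\Sigma_-$, two minimal copies of $\Sigma$, from which $M$ is recovered by a gluing diffeomorphism $\phi:\Sigma_+\to\Sigma_-$. Let $d_\pm$ be the distance in $\widehat M$ to $\Sigma_\pm$, set $a=\operatorname{dist}_{\widehat M}(\Sigma_+,\Sigma_-)>0$, and let $\gamma$ be a minimizing geodesic realizing it; $\gamma$ has interior in $\operatorname{int}\widehat M$, meets $\Sigma_\pm$ orthogonally, and $d_+(\gamma(t))=t$, $d_-(\gamma(t))=a-t$. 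Now $f:=d_++d_-\ge a$ everywhere with equality along $\gamma$; near the interior point $\gamma(a/2)$ all the relevant minimizing geodesics to $\Sigma_\pm$ stay interior, so Lemma \ref{lemma:distance} makes $f$ superharmonic in the barrier sense there, and since $f$ attains an interior minimum the maximum principle forces $f\equiv a$ (the set $\{f=a\}$ being open and closed in the connected $\operatorname{int}\widehat M$). Hence $\Delta d_+=-\Delta d_-$ is both $\le0$ and $\ge0$ in the barrier sense, so $d_+$ is harmonic; by Lemma \ref{lemma:levelsurface} its level sets are minimal and $\operatorname{Ric}(\gamma',\gamma')\equiv0$ along every $d_+$–geodesic, so feeding $\operatorname{tr}S\equiv0$ and $\operatorname{Ric}(\gamma',\gamma')\equiv0$ into the Riccati equation $S'+S^2+R=0$ for the shape operators of the level sets (with $R(X)=R(X,\gamma')\gamma'$) gives $S\equiv0$ and $R(\cdot,\gamma')\gamma'\equiv0$. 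Thus $\nabla d_+$ is a parallel unit field without interior critical or focal points, $\widehat M$ splits isometrically as $\Sigma\times[0,a]$ with $\Sigma_+=\Sigma\times\{0\}$ totally geodesic, and since regluing must reproduce the smooth manifold $M$ the gluing map $\phi$ is an isometry of $\Sigma$ in these coordinates; as $\II$ vanishes from both sides, $\Sigma$ is totally geodesic in $M$. This is conclusion (a).

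\emph{Setting up the separating case.} Now assume $\Sigma$ separates, $M\setminus\Sigma=D_1\cup D_2$; then $M=\bar D_1\cup_\Sigma\bar D_2$ and van Kampen gives $\pi_1(M)=\pi_1(\bar D_1)*_{\pi_1(\Sigma)}\pi_1(\bar D_2)$, so if $i_*:\pi_1(\Sigma)\to\pi_1(\bar D_j)$ is surjective for both $j$ then automatically $i_*:\pi_1(\Sigma)\to\pi_1(M)$ and $i_*:\pi_1(D_j)\to\pi_1(M)$ (note $D_j\hookrightarrow\bar D_j$ is a homotopy equivalence) are surjective. So it suffices to prove $i_*:\pi_1(\Sigma)\to\pi_1(\bar D_1)$ is onto. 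Following Frankel, let $\pi:\bar D\to\bar D_1$ be the covering corresponding to the subgroup $i_*(\pi_1(\Sigma))\subseteq\pi_1(\bar D_1)$; then $\pi^{-1}(\Sigma)=\partial\bar D$ has a distinguished component $\Sigma_0$ mapping homeomorphically onto $\Sigma$, and counting double cosets $i_*(\pi_1\Sigma)\backslash\pi_1(\bar D_1)/i_*(\pi_1\Sigma)$ shows that $i_*$ is onto iff $\partial\bar D$ is connected.

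\emph{The core argument.} Suppose $\partial\bar D$ is disconnected and let $\Sigma_1$ be the boundary component of $\bar D$ nearest to $\Sigma_0$. Then $\Sigma_0,\Sigma_1$ are disjoint embedded $2$-sided minimal hypersurfaces in $\bar D$ with $\Sigma_0$ compact, so they are joined by a minimizing geodesic $\gamma$ meeting both orthogonally with interior in $\operatorname{int}\bar D$, and the computation of Lemma \ref{lemma:levelsurface} gives $\int_\gamma\operatorname{Ric}(\gamma',\gamma')\le0$, hence $\operatorname{Ric}(\gamma',\gamma')\equiv0$ on $\gamma$. If $\operatorname{Ric}$ were positive somewhere along such a geodesic this would already be impossible (Frankel's generalized Hadamard theorem); in the borderline case the rigidity of the first paragraph (equivalently Petersen–Wilhelm rigidity) applies to the slab swept out from $\Sigma_0$, and because $\Sigma_0,\Sigma_1$ are boundary components it forces $\bar D$ to be a metric product $\Sigma\times[0,a]$ double–covering $\bar D_1$, so that $\bar D_1$ is a twisted $I$–bundle over a closed totally geodesic hypersurface of $M$. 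This rigid possibility must be excluded, and I expect this to be the crux of the whole proof: it is not purely topological, so a genuinely geometric argument (and, as low-dimensional models indicate, a hypothesis on $n$) is needed — I would aim to exclude it by passing to a double cover of $M$ in which $\Sigma$ becomes a nonseparating minimal hypersurface, applying the rigidity of case (a) there, and deriving a contradiction with that cover being a closed manifold. Granting this, $\partial\bar D$ is connected and $i_*:\pi_1(\Sigma)\to\pi_1(\bar D_1)$ is surjective.

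\emph{The Heegaard conclusion.} Finally let $n=3$ and $\Sigma$ be separating. By the above each $\bar D_j$ is a compact $3$–manifold with connected boundary $\Sigma$ for which $i_*:\pi_1(\Sigma)\to\pi_1(\bar D_j)$ is onto, and a standard fact in $3$–manifold topology (proved via the loop theorem: such a manifold is a compression body with empty negative boundary) says that $\bar D_j$ is then a handlebody. Hence $\Sigma$ separates $M$ into two handlebodies, i.e.\ $\Sigma$ is a Heegaard surface.
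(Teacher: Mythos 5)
Your treatment of part (a) is correct and amounts to the same proof as the paper's: cutting $M$ along $\Sigma$ is an equivalent reformulation of their cyclic cover $\hat M$, and closing the rigidity via the Riccati equation rather than the Bochner formula is cosmetic. For part (b) you correctly put your finger on the crux — that the Petersen--Wilhelm rigidity leaves open the possibility that $\bar D_1$ is a twisted $I$-bundle over a closed $1$-sided totally geodesic hypersurface — and you were right to be suspicious that no ``genuinely geometric argument'' closes it, because it cannot be closed: the rigid alternative actually occurs. Take $M=(\mathbb S^{n-1}\times\mathbb R)/\Gamma$ with $\Gamma\cong\mathbb Z_2*\mathbb Z_2$ generated by $\alpha(x,t)=(-x,-t)$ and $\beta(x,t)=(x,t+2)$, equipped with the quotient of the product metric. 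Then $M$ is compact, $\mathrm{Ric}\ge0$, $M$ is diffeomorphic to $\mathbb{RP}^n\#\mathbb{RP}^n$ (and is orientable when $n$ is odd, in particular for $n=3$), and the image $\Sigma$ of $\mathbb S^{n-1}\times\{\tfrac12\}$ is a compact connected embedded $2$-sided totally geodesic separating hypersurface. Each $\bar D_j$ is the twisted $I$-bundle over $\mathbb{RP}^{n-1}$, and $i_*:\pi_1(\Sigma)\to\pi_1(\bar D_j)\cong\mathbb Z_2$ is the inclusion of an index-$2$ subgroup, hence not onto. Thus the double-cover maneuver you sketch cannot produce a contradiction, and for $n=3$ the twisted $I$-bundle is not a handlebody, so the Heegaard conclusion also fails.

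This is not a defect peculiar to your proposal: the paper's own proof of (b) contains the same unaddressed case. After showing that a disconnected $\partial\tilde D_j$ forces the universal cover $\tilde D_j$ to split isometrically as $\Sigma'\times(0,d_0)$, the paper asserts that hence $M\setminus\Sigma$ is isometric to $\Sigma\times(0,d_0)$ and reaches a contradiction with separation. That inference is only valid if the deck group $\pi_1(D_j)$ acts trivially on the interval factor. An isometric deck transformation reflecting the interval while acting nontrivially on $\Sigma'$ is equally compatible with the splitting, yields $D_j$ a twisted $I$-bundle with connected boundary, and produces no contradiction — exactly the configuration in the example above. So where you paused, the paper pressed on through a false step. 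A correct dichotomy must either add a hypothesis (for instance that $M$ contains no closed $1$-sided totally geodesic hypersurface, or that $\pi_1(M)$ has no $2$-torsion) or explicitly include this third rigid alternative in which $\bar D_j$ is a twisted $I$-bundle over a $1$-sided totally geodesic hypersurface and $i_*$ has image of index $2$.
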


\begin{proof}
a) Choose a function that is equal to 0 on $\Sigma$ and in a neighborhood of one side of $\Sigma$ and equal to 1 in a neighborhood of the other side of $\Sigma$. Since $\Sigma$ is nonseparating, this function can be extended to a smooth function on $M \setminus \Sigma$, and by passing to the quotient mod $\mathbb{Z}$ we obtain a nonconstant smooth function
\[
       f : M \rightarrow \mathbb{R} / \mathbb{Z} = \mathbb{S}^1.
\]
Let $f_*: \pi_1(M) \rightarrow \mathbb{Z}$ be the induced map on the fundamental groups, and for the universal cover $\tilde{M}$ of $M$, consider the cyclic cover $\hat{M}=\tilde{M}/G$ of $M$ corresponding to the subgroup $G=\ker f_*$ of $\pi_1(M)$. Since $\hat{M}$ has a geodesic line, the result follows from the splitting theorem \cite{CG}. However, we will give an alternate direct proof, which will be needed for the proof of part b).

Let $\Sigma_1,\Sigma_2\subset\hat{M}$ be two adjacent preimages of $\Sigma$ under the projection $\pi: \hat{M} \rightarrow M$ such that $\Sigma_1$ and $\Sigma_2$ bound a connected domain $D\subset\hat{M}$ on which $\pi$ is 1-1. Here we adopt the arguments of \cite{PW}. If $d_i$ is the distance function on $D$ to $\Sigma_i$, then our hypotheses on the Ricci curvature of $M$ and the minimality of $\Sigma_i$ imply that $\Delta d_i\leq0$ in the barrier sense, by Lemma \ref{lemma:distance}. Hence $d_1+d_2$ is also superharmonic in the barrier sense. But it has an interior minimum on a minimal geodesic $\gamma$ between $\Sigma_1$ and $\Sigma_2$ and so by the maximum principle it is constant on $D$. Then it follows that $d_i$ is harmonic and smooth on $D$. Recall the Bochner formula for a smooth function $u$ on $\hat{M}$:
\begin{equation*}\label{bochner}
\frac{1}{2}\Delta|du|^2=|{\rm Hess}\,u|^2+\langle\nabla u,\nabla(\Delta u)\rangle+{\rm Ric}(\nabla u,\nabla u).
\end{equation*}
Since $|du|=1$ for $u=d_i$, the formula yields ${\rm Hess}\,d_i=0$ on $D$. Therefore $\Sigma_i$ is totally geodesic and $\hat{M}$ is isometric to $\Sigma_i\times\mathbb R^1$. Thus $M$ is isometric to a mapping torus
$$ \frac{\Sigma \times [0,a]}{(x,0) \sim (y,a) \mbox{ iff } \phi(x)=y}\,,$$
where $a$ is the length of $\gamma$ and $\phi:\Sigma\rightarrow\Sigma$ is an isometry.

b) Let $\pi:\tilde{D}_j\rightarrow D_j$ be the universal cover of $D_j,\,j=1,2$.  Extending $\pi$ up to $\partial\tilde{D}_j$, we claim that $\partial \tilde{D}_j=\pi^{-1}(\Sigma)$ is connected. If $\partial \tilde{D}_j$ is not connected, let
\[
         d_0=\inf\{ d(\Sigma',\Sigma''): \Sigma' \mbox{ and } \Sigma''
         \mbox{ are distinct components of } \partial \tilde{D}_j \}.
\]
As in \cite{L}, there exist components $\Sigma'$ and $\Sigma''$ such that there is a geodesic $\gamma$ in $\tilde{D}_j$ from $\Sigma'$ to $\Sigma''$ of length $d_0$. By continuity, there is a neighborhood of $\gamma$ in $\tilde{D}_j$ such that the distance functions $d'$ and $d''$ to $\Sigma'$ and to $\Sigma''$ in $\tilde{D_j}$ are well defined. By the same arguments  as in a) we see that $d'+d''$ is superharmonic. Note that $d'+d''$ has interior minimum at all points of $\gamma$. As in a), it follows that a neighborhood of $\gamma$ is isometric to a product manifold $(\Sigma' \cap U) \times(0,d_0)$, where $U$ is a neighborhood of $\gamma(0)$ in $\Sigma'$.
Let $\mathcal{U}$ be the set of points in $\Sigma'$ that can be connected to $\Sigma''$ by a geodesic of length $d_0$. By the argument above, $\mathcal{U}$ is open and $U \subset \mathcal{U}$. We claim that $\mathcal{U}$ is also closed. To see this, let $p_m$ be a sequence of points in $\mathcal{U}$ converging to $p \in \Sigma'$, and let $\gamma_m$ be a geodesic in $\tilde{D}_j$ of length $d_0$ from $p_m$ to $\Sigma''$. By passing to a subsequence we can see that there exists a geodesic $\gamma_0$ of length $d_0$ from $p$ to $\Sigma''$ such that $\{\gamma_m\}$ converges to $\gamma_0$. It may happen that $\gamma_0$ hits $\partial\tilde{D}_j\setminus\Sigma'$ at some point $q$ with ${\rm dist}(p,q)<d_0$. But then dist$(p,\partial\tilde{D}_j\setminus\Sigma')<d_0$, which is a contradiction. Therefore $p \in \mathcal{U}$ and $\mathcal{U}$ is closed. Since $\mathcal{U}$ is both open and closed, $\mathcal{U}=\Sigma'$. Therefore $\tilde{D}_j$ is isometric to the product manifold $\Sigma'\times(0,d_0)$. Hence  $M\setminus\Sigma$ is isometric to $\Sigma\times(0,d_0)$, and so $M$ is diffeomorphic (not necessarily isometric) to $\Sigma\times\mathbb S^1$. But then $\Sigma$ is nonseparating in $M$, which is a contradiction. Hence $\partial \tilde{D}_j$ is connected, as claimed.

Let $\ell$ be a loop in ${\bar{D}_j}$ with base point $p\in\Sigma$. Lift $\ell$ to a curve $\tilde{\ell}$ in $\tilde{D}_j$ from $p_1\in\pi^{-1}(p)$ to $p_2\in\pi^{-1}(p)$. Since $\pi^{-1}(\Sigma)$ is connected, there is a curve $\hat{\ell}$ in $\pi^{-1}(\Sigma)$ connecting $p_1$ to $p_2$. Moreover, $\hat{\ell}$ is homotopic to $\tilde{\ell}$ in $\tilde{D}_j$ as $\tilde{D}_j$ is simply connected. Hence $\pi(\hat{\ell})$ is a loop in $\Sigma$ that is homotopic in $D_j$ to $\ell$. Therefore the map $i_*:\pi_1(\Sigma)\rightarrow \pi_1(\bar{D}_j)$ induced by inclusion is surjective.

Let $\ell$ be a loop in $M$. Divide $\ell$ into two parts $\ell_1,\ell_2$ such that $\ell_j\subset D_j$. Cover $\ell_j$ with a curve $\tilde{\ell}_j$ in $\tilde{D}_j$ as above. By the connectedness of $\pi^{-1}(\Sigma)$ again we have a curve $\hat{\ell}_j$ in $\pi^{-1}(\Sigma)$ with the same end points as $\tilde{\ell}_j$ and homotopic to $\tilde{\ell}_j$. Then $\pi(\hat{\ell}_1)\cup\pi(\hat{\ell}_2)$ is a loop in $\Sigma$ that is homotopic to $\ell$. Hence $i_*:\pi_1(\Sigma)\rightarrow \pi_1(M)$ is also surjective.

Since $\Sigma$ is 2-sided there exists $\bar{\ell}_j\subset D_j$ which is very close to $\pi(\hat{\ell}_1)\cup\pi(\hat{\ell}_2)$. Hence $\bar{\ell}_j$ and $\pi(\hat{\ell}_1)\cup\pi(\hat{\ell}_2)$ are homotopic and therefore $i_*:\pi_1(D_j)\rightarrow \pi_1(M)$ is surjective.

To prove the final statement, suppose $n=3$. The surjectivity of $i_*:\pi_1(\Sigma)\rightarrow \pi_1(\bar{D}_j)$ implies $\pi_1(\bar{D}_j,\Sigma)\approx0$. Then by \cite{Pa} we can use the Loop Theorem and Dehn's Lemma to show that $\bar{D}_1$ and $\bar{D}_2$ are handlebodies. Thus $\Sigma$ is a Heegaard surface.
\end{proof}

\begin{corollary}\label{unknotted}
Let $\Sigma$ be a compact connected embedded minimal surface in a Riemannian three-sphere $M$ of nonnegative Ricci curvature. Then $\Sigma$ is unknotted.
\end{corollary}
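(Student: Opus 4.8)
The plan is to extract the corollary from Theorem \ref{heegaard} together with the classical uniqueness of Heegaard splittings of the three-sphere; essentially no new differential geometry is needed, so the work is in checking that the hypotheses of Theorem \ref{heegaard} are automatically met here and that its conclusion really does say that $\Sigma$ is unknotted.

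First I would observe that in a homotopy three-sphere $M$ the 2-sidedness hypothesis is free: since $H^1(M;\mathbb{Z}/2\mathbb{Z})=0$, the normal line bundle of the embedded closed surface $\Sigma$ is trivial, so $\Sigma$ is 2-sided. Hence Theorem \ref{heegaard} applies. Its alternative (a) cannot occur, because in that case $M$ would be a mapping torus, and a mapping torus admits a surjection of $\pi_1$ onto $\mathbb{Z}$, contradicting $\pi_1(M)=0$ (equivalently, one may note directly that $H_1(M;\mathbb{Z}/2\mathbb{Z})=0$ forces every connected closed surface in $M$ to separate, so $\Sigma$ is separating). Thus we land in case (b).

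Next, since $\dim M=3$, the final clause of Theorem \ref{heegaard} tells us that $\Sigma$ is a Heegaard surface: writing $M\setminus\Sigma=D_1\cup D_2$, the closures $\bar D_1$ and $\bar D_2$ are handlebodies, of some common genus $g=\operatorname{genus}(\Sigma)$, glued along $\Sigma$ to form $M\cong\mathbb{S}^3$. The last step is to invoke Waldhausen's theorem: any two Heegaard splittings of $\mathbb{S}^3$ of the same genus are ambient isotopic. Consequently $\Sigma$ is isotopic to the standard genus-$g$ Heegaard surface, i.e. $\Sigma$ is unknotted, as claimed.

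The main obstacle is not analytic — all the curvature input is already packaged in Theorem \ref{heegaard}. The points that need care are purely topological bookkeeping: confirming that 2-sidedness is automatic in a homotopy sphere (so the theorem can be applied), ruling out the nonseparating alternative, and making sure that the notion of ``unknotted'' intended in the statement is precisely ``the associated Heegaard splitting is the standard one,'' so that Waldhausen's uniqueness theorem closes the argument with nothing left to prove.
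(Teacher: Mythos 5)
Your proposal is correct and follows essentially the same route as the paper: apply Theorem~\ref{heegaard} to conclude that $\Sigma$ is a Heegaard surface, then invoke Waldhausen's uniqueness of Heegaard splittings of the three-sphere. (The paper rules out the nonseparating case via the Jordan--Brouwer separation theorem rather than via $\pi_1(M)=0$ as you do, but both arguments are fine, and your observation that 2-sidedness is automatic in $\mathbb{S}^3$ is a useful point the paper leaves implicit.) One small omission: Waldhausen's theorem as cited in~\cite{W} is a PL statement, and the paper is careful to add the Hirsch--Mazur smoothing theorem~\cite{HM} to upgrade the resulting PL homeomorphism to a smooth diffeomorphism, which is the category in which ``unknotted'' is defined; your argument should include that step (or cite a smooth-category formulation of Waldhausen's theorem) to close the gap between ``PL isotopic'' and the smooth conclusion.
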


\begin{proof}
From the Jordan-Brouwer separation theorem it follows that $\Sigma$ is separating. We show that $\Sigma$ is unknotted in the sense that if $\Sigma'$ is a standardly embedded surface of the same genus as $\Sigma$ in $M$, then there exists an orientation preserving diffeomorphism $f:M \rightarrow M$ such that $f(\Sigma)=\Sigma'$.
By Theorem \ref{heegaard} b),  $\Sigma$ is a Heegaard surface. It follows from \cite{W} that there is a PL homeomorphism $\tilde{f}: M \rightarrow M$ such that $\tilde{f}(\Sigma)=\Sigma'$. Then by results from \cite{HM}  there exists a smooth map $f$ as claimed.
\end{proof}

\begin{remark} It should be mentioned that Meeks and Rosenberg \cite{MR} showed a noncompact properly embedded minimal surface in $\mathbb S^2\times\mathbb R$ is unknotted.
\end{remark}

The result of Frankel shows that two compact minimal hypersurfaces in a manifold of positive Ricci curvature must intersect. However, a manifold of nonnegative Ricci curvature can have many disjoint compact minimal hypersurfaces. Furthermore, in the case of negative curvature, there can even exist disjoint hypersurfaces that bound a {\em mean convex} region; for example, spheres equidistant to two disjoint planes in hyperbolic space. On the other hand two disjoint horospheres in hyperbolic space cannot bound a mean convex region. This suggests that there can only exist a mean convex region with two disjoint boundary components if the mean curvature is less than a critical number involving a lower bound on the curvature of the ambient manifold.

Here we show this, that in fact Frankel's argument can be extended to the case of manifolds of negative Ricci curvature provided the Ricci curvature is bounded from below and the hypersurfaces have mean curvature that is sufficiently large. We obtain the corresponding result on surjectivity of the natural homomorphism of fundamental groups for compact 2-sided hypersurfaces with mean curvature above this critical (sharp) threshold involving the lower bound on the Ricci curvature. In the 3-dimensional case, such hypersurfaces must bound handlebodies; for example, a compact connected 2-sided hypersurface with mean curvature $|H| \geq 2$ in hyperbolic 3-space bounds a handlebody.

\begin{theorem} \label{theorem:negative}
Let $M^n$ be a complete Riemannian manifold of Ricci curvature bounded from below, $Ric_M \geq -(n-1)k$, $k >0$.
Let $\Sigma$ be a compact hypersurface that bounds a connected region $\Omega$ in $M$. Suppose that the mean curvature vector of $\Sigma$ points everywhere into $\Omega$, and $H \geq (n-1)\sqrt{k}$. Then $\Sigma$ is connected, and the map
\[
       i_*: \pi_1(\Sigma) \rightarrow \pi_1(\bar{\Omega})
\]
induced by the inclusion is surjective. If $n=3$ then $\Omega$ is a handlebody.
\end{theorem}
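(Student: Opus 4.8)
The plan is to run the scheme of the proof of Theorem~\ref{heegaard}~b), but with the superharmonicity of the distance function replaced by a \emph{strict} inequality coming from the sharp lower bound $H\ge(n-1)\sqrt k$, which rules out the rigid (splitting) case. The first step is to strengthen Lemmas~\ref{lemma:levelsurface} and~\ref{lemma:distance} to the setting $\mathrm{Ric}_M\ge-(n-1)k$. Let $d$ be the distance function to $\Sigma$ inside $\Omega$. The crude summed second-variation inequality of Lemma~\ref{lemma:levelsurface} only yields $H_S(q)\ge(n-1)\sqrt k-(n-1)k\,d(q)$, which is useless for $d$ large, so instead I would integrate the Riccati (Bochner) equation along a unit-speed geodesic $\gamma$ issuing normally from $\Sigma$ into $\Omega$. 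Writing $m(t)$ for the mean curvature of the level set $\{d=t\}$ at $\gamma(t)$ with respect to $\gamma'$, the Bochner formula gives $m'=|\mathrm{Hess}\,d|^2+\mathrm{Ric}(\gamma',\gamma')\ge\frac{m^2}{n-1}-(n-1)k$, since $\mathrm{Hess}\,d$ kills $\gamma'$ and hence $|\mathrm{Hess}\,d|^2\ge m^2/(n-1)$. As $m(0)=H_\Sigma\ge(n-1)\sqrt k$ and the constant $(n-1)\sqrt k$ is an equilibrium of $\bar m'=\frac{\bar m^2}{n-1}-(n-1)k$, ODE comparison gives $m(t)\ge(n-1)\sqrt k$ wherever $d$ is smooth, so $\Delta d=-m\le-(n-1)\sqrt k$ there. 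Repeating the perturbation construction of Lemma~\ref{lemma:distance} --- replacing $\Sigma$ by a nearby hypersurface $\Sigma_\delta$, whose mean curvature is still $\ge(n-1)\sqrt k-\varepsilon$ --- then promotes this to: $\Delta d\le-(n-1)\sqrt k$ in the barrier sense, i.e.\ at any point there is a smooth upper support function $d_\varepsilon$ with $d_\varepsilon\ge d$ nearby, $d_\varepsilon=d$ at the point, and $\Delta d_\varepsilon\le-(n-1)\sqrt k+\varepsilon$.

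Next I would prove connectedness of $\Sigma$. Suppose $\Sigma$ has distinct components and let $d_0$ be the minimum of $\mathrm{dist}_{\bar\Omega}(\Sigma',\Sigma'')$ over all pairs of distinct components, realized by a geodesic $\gamma$ of length $d_0$ from a component $\Sigma_1$ to a component $\Sigma_2$; minimality of $d_0$ forces the interior of $\gamma$ to avoid $\partial\Omega=\Sigma$ (otherwise truncate $\gamma$ to a shorter connection between two components). Then the distances $d_1,d_2$ to $\Sigma_1,\Sigma_2$ are defined near $\gamma$ and, by the previous step, $\Delta(d_1+d_2)\le-2(n-1)\sqrt k<0$ in the barrier sense; but $d_1+d_2\ge d_0$ near $\gamma$ with equality along $\gamma$, so $d_1+d_2$ attains an interior minimum, at which a smooth upper support function $d_\varepsilon$ also has a minimum, giving the contradiction $0\le\Delta d_\varepsilon\le-2(n-1)\sqrt k+\varepsilon<0$ for $\varepsilon$ small. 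Hence $\Sigma$ is connected. The same argument applied in the universal cover $\pi:\tilde\Omega\to\Omega$, extended over $\partial\tilde\Omega=\pi^{-1}(\Sigma)$, shows $\pi^{-1}(\Sigma)$ is connected.

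Given this, surjectivity follows by the loop-lifting argument of Theorem~\ref{heegaard}~b): lift a loop $\ell\subset\bar\Omega$ based at $p\in\Sigma$ to a path $\tilde\ell$ in $\tilde\Omega$ with endpoints in $\pi^{-1}(p)$, join those endpoints by a path $\hat\ell$ inside the connected set $\pi^{-1}(\Sigma)$, note that $\hat\ell$ is homotopic to $\tilde\ell$ in the simply connected $\tilde\Omega$, and project down to a loop $\pi(\hat\ell)\subset\Sigma$ homotopic in $\bar\Omega$ to $\ell$; thus $i_*:\pi_1(\Sigma)\to\pi_1(\bar\Omega)$ is onto. When $n=3$ this gives $\pi_1(\bar\Omega,\Sigma)=0$, so the Loop Theorem and Dehn's Lemma (\cite{Pa}) show $\bar\Omega$ is a handlebody, exactly as in Theorem~\ref{heegaard}.

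The main obstacle is the first step: getting $\Delta d\le-(n-1)\sqrt k$ with the \emph{sharp} constant and for \emph{all} values of $d$, not merely small $d$. The elementary summed second-variation bound degrades linearly in $d$; one genuinely needs the Riccati equation together with the fact that $(n-1)\sqrt k$ is precisely the equilibrium mean curvature (the horosphere value in the model space $\mathbb H^n(-k)$), and then one must check that the perturbation in the barrier construction of Lemma~\ref{lemma:distance} costs only $\varepsilon$, so that the strict sign survives. A secondary point to handle with care is that $\bar\Omega$ should be compact (so that minimizing geodesics between boundary components exist, in $\bar\Omega$ and in $\tilde\Omega$) and that these minimizers indeed stay in the interior.
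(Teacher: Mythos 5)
Your plan is correct in outline but takes a genuinely different route from the paper. The paper proves connectedness by a direct Frankel-type second variation argument: given two components $\Sigma_1,\Sigma_2$, it takes a minimizing geodesic $\gamma:[-l/2,l/2]\to M$ between them, builds vector fields $V_i=\varphi E_i$ from parallel orthonormal fields $E_i$ with the explicit test function $\varphi(t)=\cosh(\sqrt k\,t)/\cosh(\sqrt k\,l/2)$ (chosen so $\varphi''-k\varphi=0$ and $\varphi(\pm l/2)=1$), and sums the second variations of length. Integrating by parts, the interior term vanishes identically, the boundary mean-curvature terms contribute $-2(n-1)\sqrt k$, and the $\varphi\varphi'$ boundary terms contribute $2(n-1)\sqrt k\tanh(\sqrt k\,l/2)<2(n-1)\sqrt k$, yielding a strictly negative total and hence a contradiction to stability. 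Connectedness of $\pi^{-1}(\Sigma)$ in $\tilde\Omega$, and the rest, then follow exactly as you describe. Your alternative replaces this with a sharpened superharmonicity bound $\Delta d\le-(n-1)\sqrt k$ obtained from the Riccati comparison along normal geodesics (noting that $(n-1)\sqrt k$ is the equilibrium, i.e.\ the horosphere mean curvature in $\mathbb H^n(-k)$), promoted to the barrier sense via a perturbation of $\Sigma$, and then uses the maximum principle as in Theorem~\ref{heegaard}~b). Both work; the paper's argument is shorter and self-contained because it does not require re-proving Lemmas~\ref{lemma:levelsurface} and~\ref{lemma:distance} in the negatively curved setting, while your argument produces the stronger intermediate fact $\Delta d\le-(n-1)\sqrt k$ in the barrier sense.

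Two details you flag but do not carry out deserve a word. First, in the barrier step, perturbing to $\Sigma_\delta$ with $H_{\Sigma_\delta}\ge(n-1)\sqrt k-\varepsilon$ puts the initial data slightly below the repelling equilibrium of the Riccati ODE, so the comparison solution drifts downward; you must use that $d$ is bounded on a neighborhood of the minimizing geodesic (by its length $d_0$) and invoke continuous dependence on initial data over $[0,d_0]$ to conclude $\Delta d_\varepsilon\le-(n-1)\sqrt k+C(d_0,k)\varepsilon$, which is enough for the maximum-principle contradiction. Second, for the existence of a minimizing geodesic between components of $\partial\tilde\Omega$ in the (possibly noncompact) universal cover, the paper simply cites Lawson; you should do the same or argue by the cocompact deck action. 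With these points filled in, your proof is sound.
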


\begin{proof}
We argue by contradiction. Suppose $\Sigma$ is not connected. Let $\Sigma_1$ and $\Sigma_2$ be distinct connected components of $\Sigma$. Then there exists a unit speed geodesic $\gamma: [-l/2,l/2] \rightarrow M$ with $\gamma(-l/2)=p \in \Sigma_1$ and $\gamma(l/2)=q \in \Sigma_2$ that realizes distance from $\Sigma_1$ to $\Sigma_2$, and meets $\Sigma$ orthogonally at the endpoints on the mean convex side of $\Sigma$. Let $e_1, \ldots, e_{n-1}$ be an orthonormal basis for the tangent space to $\Sigma_1$ at $p$, and parallel transport to obtain parallel orthonormal vector fields $E_1, \ldots, E_{n-1}$ along $\gamma$. Since $\gamma$ meets $\Sigma_2$ orthogonally, $E_1(q), \ldots, E_{n-1}(q)$ are tangent to $\Sigma_2$ at $q$. Let $V_i(t)=\varphi(t)E_i(t)$ with $\varphi(t)=\frac{1}{c(l)} \cosh(\sqrt{k}\,t)$ and $c(l)=\cosh(\sqrt{k}\,l/2)$. Note that $\varphi''-k\varphi=0$ and $\varphi(-l/2)=\varphi(l/2)=1$. Consider the sum of the second variations of length of $\gamma$ in the directions $V_i$:
\begin{align*}
     0 &\leq \sum_{i=1}^{n-1}L_{V_i}''(0) \\
     &=\int_{-l/2}^{l/2} \left[ (n-1)(\varphi')^2-\varphi^2 \mbox{Ric}(\gamma'\gamma') \right]dt
                      +\sum_{i=1}^{n-1}\varphi^2 \langle \nabla_{E_i}E_i, \gamma' \rangle\Big|_{-l/2}^{l/2} \\
     &= -\int_{-l/2}^{l/2} \left[  (n-1) \varphi \varphi'' + \varphi^2 \mbox{Ric}(\gamma'\gamma') \right] dt
                        -H_\Sigma(p)-H_\Sigma(q) + (n-1)\varphi \varphi'\Big|_{-l/2}^{l/2} \\
     & \leq -(n-1) \int_{l/2}^{l/2} \varphi (\varphi'' - k\varphi) \, dt
          -2(n-1)\sqrt{k} + 2(n-1)\sqrt{k}\, \tanh(\sqrt{k} \,l/2) \\
     & = -2(n-1)\sqrt{k} + 2(n-1)\sqrt{k}\, \tanh(\sqrt{k} \,l/2) \\
     & < 0,
\end{align*}
which is a contradiction. Therefore $\Sigma$ is connected. Similarly $\pi^{-1}(\Sigma)$ is connected in the universal cover $\tilde{\Omega}$ of $\Omega$ under the covering map $\pi:\tilde{\Omega}\rightarrow\Omega$. It then follows by arguments as in the proofs of Theorem \ref{heegaard} b) that $i_*: \pi_1(\Sigma) \rightarrow \pi_1(\bar{\Omega})$ is surjective, and if $n=3$ then $\Omega$ is a handlebody.
\end{proof}

\begin{remark}
The assumption that $\Sigma$ bounds a region is not necessary. If $M^n$ is a complete Riemannian manifold with $Ric_M \geq -(n-1)k$, $k >0$, and $\Sigma$ is a compact 2-sided hypersurface in $M$ with $|H| \geq (n-1)\sqrt{k}$, then it follows that $\Sigma$ bounds a collection of disjoint connected regions $\Omega_1, \ldots, \Omega_s$ in $M$ such that the mean curvature vector $\vec{H}$ points everywhere into $\Omega_i$, and each has as boundary $\partial \Omega_i$  a connected component of $\Sigma$. To see this, first observe that each component $\Sigma'$ of $\Sigma$ is separating. If not, we may construct a cyclic cover $\hat{M}$ of $M$ as in the proof of Theorem \ref{heegaard} a). Then $Ric_{\hat{M}} \geq -(n-1) k$, and each component of $\pi^{-1}(\Sigma')$ divides $\hat{M}$ into two infinite pieces. For one of these pieces, $\Omega$, the mean curvature vector $\vec{H}$ of $\partial \Omega$ points everywhere into $\Omega$ and satisfies $|H| \geq (n-1)\sqrt{k}$. It follows from \cite{S} Lemma 1 that
\[
         \mbox{Vol}(\Omega) \leq \frac{1}{n-1} \mbox{Vol}(\partial \Omega) < \infty,
\]
a contradiction. Therefore each component of $\Sigma$ is separating, and hence $\Sigma$ bounds a collection of disjoint regions $\Omega_1, \ldots, \Omega_s$ such that  $\vec{H}$ points everywhere into $\Omega_i$ for $i=1, \ldots, s$. Finally, Theorem \ref{theorem:negative} implies that $\partial \Omega_i$ is connected for each $i$ and hence is a connected component of $\Sigma$.
\end{remark}

\begin{remark}
This theorem is sharp in the sense that on a hyperbolic surface, disjoint circles of curvature 1 cannot bound a convex region, but on a hyperbolic surface with a cusp there exists a convex annular region with two boundary components (cross sections of the cusp) that have curvature slightly less than, but arbitrarily close to 1. One can construct analogous {\em compact} examples in higher dimensions in quotients of hyperbolic space $\mathbb{H}^n$. Disjoint horospheres with $H=n-1$ cannot bound a mean convex region in hyperbolic $n$-space $\mathbb{H}^n$, but in the half-space model of $\mathbb H^n$ there can exist a convex region bounded by the two hyperplanes $P_1,P_2$ with $\partial P_1=\partial P_2\subset\partial\mathbb H^n$ and making angles $\theta,\pi-\theta$ with $\partial\mathbb H^n$; the boundary components have mean curvature slightly less than, but arbitrarily close to $n-1$ as $\theta\rightarrow0$.
\end{remark}

\section{convex domain}

In this section the Riemannian manifold $M^n$ will be assumed to have nonempty boundary $\partial M$. Suppose that $M$ has nonnegative Ricci curvature and $\partial M$ is strictly convex. Recall that Frankel \cite{F} showed that two compact immersed minimal hypersurfaces in a Riemannian manifold $M$ of positive Ricci curvature must intersect. Fraser and Li (\cite{FL}, Lemma 2.4) extended Frankel's theorem to two properly embedded minimal hypersurfaces $\Sigma_1,\Sigma_2$ in $M$, i.e., $\partial\Sigma_i\subset\partial M,i=1,2$,  meeting $\partial M$ orthogonally. They also showed (\cite{FL} Corollary 2.10) that if $\Sigma$ is a properly embedded orientable minimal hypersurface in $M$ meeting $\partial M$ orthogonally, then $\Sigma$ divides $M$ into two connected components $D_1$ and $D_2$. We show that the maps $i_*:\pi_1(\Sigma)\rightarrow \pi_1(\bar{D_j})$, $j=1,\, 2$, are surjective and that $\Sigma$ is unknotted when $n=3$. We also prove some corresponding results in the case where the Ricci curvature is bounded from below by a negative constant.

\begin{lemma}\label{surjective}
Let $M$ be an $n$-dimensional compact manifold of nonnegative Ricci curvature with strictly convex boundary $\partial M$. Suppose that $\Sigma$ is a properly embedded minimal hypersurface in $M$ meeting $\partial M$ orthogonally. Then  the maps $i_*:\pi_1(\Sigma)\rightarrow \pi_1(M)$ and $i_*:\pi_1(\Sigma)\rightarrow \pi_1(\bar{D_j})$, $j=1,\,2$, are surjective, where $D_1,D_2$ are the components of $M \setminus \Sigma$.
\end{lemma}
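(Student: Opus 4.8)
The plan is to follow the proof of Theorem \ref{heegaard} b) essentially verbatim, using strict convexity of $\partial M$ to control the two places where the boundary can interfere. First recall that by \cite{FL} the hypersurface $\Sigma$ is separating, so $M\setminus\Sigma$ really does have exactly two components $D_1,D_2$; fix one, let $\pi:\tilde D_j\to D_j$ be the universal cover, and extend $\pi$ over $\partial\tilde D_j$. The boundary $\partial\tilde D_j$ is the union of $\pi^{-1}(\Sigma)$ and the lifts of $\partial M\cap\bar D_j$. As in the last paragraphs of the proof of Theorem \ref{heegaard} b), once one knows that $\pi^{-1}(\Sigma)$ is connected, surjectivity of $i_*:\pi_1(\Sigma)\to\pi_1(\bar D_j)$ follows by lifting a loop in $\bar D_j$ based on $\Sigma$ and joining the endpoints of the lift inside $\pi^{-1}(\Sigma)$, and surjectivity of $i_*:\pi_1(\Sigma)\to\pi_1(M)$ follows by cutting a loop in $M$ into its $\bar D_1$ and $\bar D_2$ arcs and doing the same in each cover; the presence of $\partial M$ changes nothing here. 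So the whole lemma reduces to showing that \emph{$\pi^{-1}(\Sigma)$ is connected in $\tilde D_j$}.

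To prove this I would argue by contradiction, as in \cite{L}: if $\pi^{-1}(\Sigma)$ is disconnected, choose distinct components $\Sigma',\Sigma''$ whose distance $d_0$ in $\tilde D_j$ is infimal, realized by a unit-speed geodesic $\gamma$ meeting $\Sigma'$ and $\Sigma''$ orthogonally. The first use of convexity is to show that the interior of $\gamma$ lies in the interior of $\tilde M$ (the lift of $\mathring M$): if $\gamma$ met $\partial\tilde M$ at an interior parameter it would be tangent there (the distance from $\gamma(t)$ to $\partial\tilde M$ vanishes and so has an interior minimum at that parameter), but a geodesic tangent to the strictly convex $\partial\tilde M$ exits $\tilde M$ on both sides, contradicting $\gamma\subset\tilde D_j$; and the only remaining way $\gamma$ can touch $\partial\tilde M$ is at an endpoint lying in $\partial\Sigma'$ or $\partial\Sigma''$, where $\gamma$ is tangent to $\partial\tilde M$ because $\Sigma$ meets $\partial M$ orthogonally, so it again bends inward. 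Hence on a neighborhood of the open arc of $\gamma$ we are in $\mathring{\tilde M}$, and there the interior analysis applies unchanged: with $d',d''$ the distances in $\tilde D_j$ to $\Sigma',\Sigma''$, Lemma \ref{lemma:distance} (with $c=0$) gives $\Delta(d'+d'')\le 0$ in the barrier sense, $d'+d''\equiv d_0$ is an interior minimum along $\gamma$, so by the maximum principle $d'+d''\equiv d_0$ near $\gamma$, whence $d'$ is harmonic and smooth and the Bochner formula forces $\mathrm{Hess}\,d'=0$. So a neighborhood of $\gamma$ is isometric to a Riemannian product $(U\cap\Sigma')\times[0,d_0]$.

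Next I would propagate this splitting exactly as in Theorem \ref{heegaard} b): the set of points of $\Sigma'$ joined to $\Sigma''$ by a length-$d_0$ geodesic in $\tilde D_j$ is open (product neighborhoods), nonempty, and closed (subsequential limit of realizing geodesics, kept off $\partial\tilde D_j\setminus\Sigma'$ by minimality of $d_0$ together with the same tangency/convexity argument for the $\partial\tilde M$ part). Hence $\tilde D_j$ is isometric to the product $\Sigma'\times[0,d_0]$. But $\partial\Sigma$, hence $\partial\Sigma'$, is nonempty, and the ``side wall'' $\partial\Sigma'\times[0,d_0]$, which is a piece of the strictly convex boundary $\partial\tilde M$, has second fundamental form vanishing in the $[0,d_0]$ direction in the product metric --- so it is not strictly convex, a contradiction. (If $\partial\Sigma=\emptyset$, then $\Sigma$ is a closed minimal hypersurface in $\mathring M$ and the statement is contained in Theorem \ref{heegaard} b).) Therefore $\pi^{-1}(\Sigma)$ is connected, and the lemma follows.

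I expect the main obstacle to be the boundary bookkeeping in the middle two paragraphs: making precise that $\gamma$ and the limiting geodesics in the open--closed step stay off the $\partial\tilde M$ part of $\partial\tilde D_j$ except, at worst, tangentially at endpoints, and that the product splitting extends cleanly up to the corner $\Sigma\cap\partial M$. Strict convexity of $\partial M$ together with orthogonality of $\Sigma$ along $\partial M$ is precisely what is needed --- and, pleasingly, it is strict convexity again that delivers the final contradiction --- but the behavior at the corner is the delicate point. A tempting alternative, doubling $M$ across $\partial M$ and $\Sigma$ across $\partial\Sigma$ and applying Theorem \ref{heegaard} b) to the double, is not quite immediate because the doubled metric is only Lipschitz across $\partial M$ unless $\partial M$ is totally geodesic, so it would need an additional smoothing argument.
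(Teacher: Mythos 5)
Your proof is correct and follows the same route the paper takes: establish that $\pi^{-1}(\Sigma)$ is connected in the universal cover $\tilde D_j$ and then lift loops exactly as in the proof of Theorem \ref{heegaard} b). The paper's printed proof of this lemma is in fact far terser than yours --- it simply asserts that $\pi^{-1}(\Sigma)$ is connected and invokes ``the same arguments as in the proof of Theorem \ref{heegaard} b)'' --- so your write-up, including the two uses of strict convexity (first to keep the minimizing geodesic and its limits off $\partial\tilde M$ via tangency considerations together with the orthogonality of $\Sigma$ to $\partial M$, and then to derive the final contradiction from the flatness of the side wall $\partial\Sigma'\times[0,d_0]$ in the product splitting), supplies precisely the boundary bookkeeping the published argument leaves to the reader.
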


\begin{proof}
Let $\tilde{D_j}$ be the universal cover of $D_j$ with the projection map  $\pi:\tilde{D_j} \rightarrow D_j$. Since  $\pi^{-1}(\Sigma)$ is connected, by the same arguments as in the proof of Theorem \ref{heegaard} b) we easily get the surjectivity of $i_*:\pi_1(\Sigma)\rightarrow \pi_1(\bar{D_j})$. Applying the same arguments to $\pi:\tilde{M}\rightarrow M$, we get the surjectivity of $i_*:\pi_1(\Sigma)\rightarrow \pi_1(M)$ as well.
\end{proof}

\begin{theorem} \label{convex-handlebodies}
Let $M$ be a $3$-dimensional compact orientable Riemannian manifold of nonnegative Ricci curvature. Suppose $M$ has nonempty boundary $\partial M$ which is strictly convex with respect to the inward unit normal. Then an orientable properly embedded minimal surface $\Sigma$ in $M$ meeting $\partial M$ orthogonally divides $M$ into two handlebodies.
\end{theorem}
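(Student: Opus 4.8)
The plan is to reduce the theorem to classical $3$-manifold topology, following the scheme used in the last part of Theorem \ref{heegaard} b); the one genuinely new feature is that the closed-up component $\bar D_j$ is a manifold with a corner along $\partial\Sigma$, whose boundary is $\Sigma$ together with a piece of $\partial M$ rather than $\Sigma$ alone.

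First I would invoke \cite{FL}: since $\Sigma$ is a connected orientable properly embedded minimal surface meeting the strictly convex boundary $\partial M$ orthogonally, $M\setminus\Sigma$ has exactly two connected components $D_1,D_2$. Write $W_j=\bar D_j$ and $A_j=\partial M\cap W_j$. Because $\Sigma$ meets $\partial M$ orthogonally, $W_j$ is a compact orientable $3$-manifold with a dihedral corner along $\partial\Sigma$, and smoothing this corner turns $W_j$ into a smooth compact orientable $3$-manifold whose boundary is the closed orientable surface $\Sigma\cup_{\partial\Sigma}A_j$. Since $\Sigma$ is connected and each component of $A_j$ has its boundary on $\partial\Sigma\subset\Sigma$, the surface $\partial W_j$ is connected. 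By Lemma \ref{surjective}, $i_*:\pi_1(\Sigma)\to\pi_1(W_j)$ is surjective, and because $\Sigma\subset\partial W_j$ the map $i_*:\pi_1(\partial W_j)\to\pi_1(W_j)$ is surjective as well.

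Next I would show that $W_j$ is irreducible. Choose a prime decomposition $W_j=P_1\#\cdots\#P_k$. As connected sums are formed along interior $2$-spheres, $\partial W_j=\partial P_1\sqcup\cdots\sqcup\partial P_k$, and connectedness of $\partial W_j$ forces exactly one factor, say $P_1$, to have nonempty boundary, the others being closed. By van Kampen, $\pi_1(W_j)=\pi_1(P_1)\ast\cdots\ast\pi_1(P_k)$ with the image of $\pi_1(\partial W_j)$ contained in the $P_1$-factor, so the surjectivity above forces $\pi_1(P_i)=1$, hence $P_i\cong\mathbb S^3$ by the Poincar\'e conjecture, for $i\geq2$. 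Thus $W_j$ is prime, and a prime orientable $3$-manifold with nonempty boundary is irreducible.

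With $W_j$ irreducible, $\partial W_j$ connected, and $\pi_1(\partial W_j)\to\pi_1(W_j)$ surjective, I would then run the standard Loop Theorem and Dehn's Lemma compression argument (cf.\ \cite{Pa}), exactly as in Theorem \ref{heegaard} b). If $\partial W_j$ were incompressible, the Loop Theorem would make $\pi_1(\partial W_j)\to\pi_1(W_j)$ injective, hence an isomorphism; but by Poincar\'e--Lefschetz duality the kernel of $H_1(\partial W_j;\mathbb Q)\to H_1(W_j;\mathbb Q)$ has half the dimension of $H_1(\partial W_j;\mathbb Q)$, which forces $\partial W_j=\mathbb S^2$ and then $W_j=B^3$ by irreducibility. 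Otherwise there is a compressing disk; cutting $W_j$ along it strictly lowers the genus of the boundary while preserving, on each resulting piece, connectedness of the boundary, irreducibility, and surjectivity of $\pi_1(\partial)\to\pi_1$ (a standard lemma). Inducting on the boundary genus, every piece is eventually a $3$-ball, and undoing the cuts---$1$-handle attachments and boundary connected sums---exhibits $W_j$ as a handlebody; hence $M=W_1\cup_\Sigma W_2$ is a union of two handlebodies. The step I expect to be most delicate is the bookkeeping at the seam $\partial\Sigma$: one must verify that smoothing the corner makes $\bar D_j$ a genuine $3$-manifold with a single connected boundary surface $\Sigma\cup_{\partial\Sigma}A_j$, and that the surjectivity in Lemma \ref{surjective}, which a priori only concerns loops in $\Sigma$, already supplies the surjectivity $\pi_1(\partial W_j)\to\pi_1(W_j)$ needed to launch the induction. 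Granting this, the remainder is the handlebody-recognition argument already used for Theorem \ref{heegaard}.
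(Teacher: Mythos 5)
Your argument is correct and follows the same basic route as the paper, namely Lawson's Loop Theorem / Dehn's Lemma compression scheme cited through \cite{Pa}; the paper's own proof of Theorem~\ref{convex-handlebodies} is only two sentences, deducing $\pi_1(\bar D_j,\Sigma)=0$ from Lemma~\ref{surjective} and then invoking the same machinery as at the end of Theorem~\ref{heegaard}. What you supply beyond the paper is the bookkeeping that the paper silently elides: that $\bar D_j$, once the dihedral corner along $\partial\Sigma$ is smoothed, is a compact orientable $3$-manifold whose \emph{full} boundary is the closed connected surface $\Sigma\cup_{\partial\Sigma}A_j$ rather than $\Sigma$ alone, so that surjectivity of $\pi_1(\Sigma)\to\pi_1(\bar D_j)$ propagates to $\pi_1(\partial W_j)\to\pi_1(W_j)$; and then the prime-decomposition argument for irreducibility and the genus-reducing induction. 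These details are exactly what is needed to make the ``as in the proof of Theorem~\ref{heegaard}'' step rigorous in the free-boundary setting, where $\Sigma$ is a proper subsurface of $\partial\bar D_j$, so your write-up is a faithful (and fuller) version of what the paper does rather than a genuinely different proof.
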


\begin{proof}
By Lemma \ref{surjective} we have $\pi_1(\bar{D}_1,\Sigma)=\pi_1(\bar{D}_2,\Sigma)=0$. As in the proof of Theorem \ref{heegaard}, using the Loop Theorem and Dehn's Lemma, we conclude that $D_1$ and $D_2$ are handlebodies.
\end{proof}

\begin{corollary}
Let $M$ be a 3-dimensional compact Riemannian manifold of nonnegative Ricci curvature with nonempty strictly convex boundary $\partial M$. Then any orientable properly embedded minimal surface $\Sigma$ in $M$ orthogonal to $\partial M$ is unknotted.
\end{corollary}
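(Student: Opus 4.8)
The plan is to imitate the proof of Corollary \ref{unknotted}, with the manifold-with-boundary $M$ playing the role of the three-sphere. By Theorem \ref{convex-handlebodies}, $\Sigma$ divides $M$ into two handlebodies $\bar D_1$ and $\bar D_2$; since $\Sigma$ meets $\partial M$ orthogonally, rounding the right-angle corner along $\partial\Sigma$ makes $\bar D_1,\bar D_2$ smooth codimension-zero submanifolds whose common frontier is a closed subsurface containing $\Sigma$, so that $\Sigma$ is a Heegaard surface of the pair $(M,\partial M)$: $M=\bar D_1\cup_\Sigma\bar D_2$ with each $\bar D_j$ a handlebody and $\partial M$ cut by the circles $\partial\Sigma$ into the two pieces $A_j=\partial M\cap\bar D_j$. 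As in Corollary \ref{unknotted}, $\Sigma$ is \emph{unknotted} if there is an orientation-preserving diffeomorphism $f\colon M\to M$ with $f(\partial M)=\partial M$ carrying $\Sigma$ onto a standardly embedded surface of the same genus (with $f(\partial\Sigma)$ standard in $\partial M$).

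I would then proceed as follows. First, pass to the double $\hat M=M\cup_{\partial M}M$; orthogonality guarantees that the double $\hat\Sigma$ of $\Sigma$ is a smoothly embedded closed surface, and the two handlebody decompositions double up to exhibit $\hat\Sigma$ as a Heegaard surface of $\hat M$. Next, invoke the uniqueness up to ambient isotopy of Heegaard splittings of $\hat M$ of a given genus, carried out equivariantly with respect to the reflection $\tau$ of $\hat M$ interchanging the two copies of $M$, to obtain a PL homeomorphism $\tilde f$ of $\hat M$ commuting with $\tau$ and sending $\hat\Sigma$ to a standard Heegaard surface; this $\tilde f$ descends to a PL homeomorphism of $M$ preserving $\partial M$ and sending $\Sigma$ to the standard surface. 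Finally, smooth $\tilde f$ by the results of \cite{HM} to obtain $f$, exactly as in Corollary \ref{unknotted}. Alternatively, when $M$ is a ball one caps off to $\mathbb S^3$ and applies Waldhausen \cite{W} directly.

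The main obstacle is the uniqueness-of-splittings step: Waldhausen's theorem \cite{W} only covers $\mathbb S^3$, so one must either restrict to, or verify that the manifolds in question are, topologically rigid enough that Heegaard splittings of each genus are unique up to isotopy. One must also (i) check that the doubled complementary pieces $\bar D_j\cup_{A_j}\bar D_j$ are again handlebodies, so that $\hat\Sigma$ genuinely is a Heegaard surface of $\hat M$, and (ii) supply a version of the uniqueness theorem equivariant under $\tau$ and compatible with the boundary pattern, so that the homeomorphism descends to $M$ fixing $\partial M$ setwise. The possibility that $\Sigma$, or the collection $\partial\Sigma\subset\partial M$, is disconnected adds only bookkeeping once this is in place.
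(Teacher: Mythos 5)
Your outline matches the paper's strategy---double $M$ and $\Sigma$, note that $\Sigma$ divides $M$ into two handlebodies so that the double $\check\Sigma$ is a Heegaard surface of $\check M$, apply Waldhausen's uniqueness of Heegaard splittings, and smooth via Hirsch--Mazur. However, you correctly flag ``the main obstacle'' (Waldhausen's theorem only applies to $\mathbb S^3$) and then leave it unresolved, which is a genuine gap. The missing ingredient is that under the hypotheses---a compact Riemannian $3$-manifold of nonnegative Ricci curvature with nonempty strictly convex boundary---$M$ is in fact always diffeomorphic to the $3$-ball $B^3$. This is Theorem~2.11 of \cite{FL}, and the paper invokes it directly. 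Once you know $M\cong B^3$, the double $\check M$ is $\mathbb S^3$ and Waldhausen applies with no ``equivariant uniqueness for more general $3$-manifolds'' needed; your alternative sentence (``when $M$ is a ball one caps off \ldots'') is exactly the situation that always holds, not a special case.

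Your secondary concerns (i) that the doubled complementary regions $\bar D_j\cup_{A_j}\bar D_j$ are handlebodies, and (ii) that the Waldhausen homeomorphism can be chosen $\tau$-equivariantly so as to descend to $M$, are legitimate points that the paper also passes over in silence; they are standard but should be checked if you want a fully self-contained argument. Still, the essential content you are missing is the topological rigidity of $M$ itself (namely $M\cong B^3$), which is the step that makes Waldhausen available at all.
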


\begin{proof}
$M$ is diffeomorphic to the 3-ball $B^3$ by Theorem 2.11, \cite{FL}. ${\Sigma}$ divides ${M}$ into two handlebodies by Theorem \ref{convex-handlebodies}. Let $\check{M}$ be the doubling of $M$, and let $\check{\Sigma}$ be the doubling of $\Sigma$ in $M$. Then $\check{M}$ is diffeomorphic to $S^3$. By \cite{W} and \cite{HM} as in Corollary \ref{unknotted}, $\check{\Sigma}$ is unknotted in $\check{M}$ and $\Sigma$ is unknotted in $M$.
\end{proof}

We also have a version in the case of curvature with a negative lower bound.

\begin{theorem} \label{theorem:convex-negative}
Let $M^n$ be a compact Riemannian manifold with nonempty boundary. Suppose $M$ has Ricci curvature bounded from below $\mbox{Ric}_M \geq -(n-1)k$, $k >0$, and the boundary $\partial M$ is strictly convex with respect to the inward unit normal. Let $\Sigma$ be a hypersurface in $M$ that bounds a connected region $\Omega$ in $M$ and makes a constant contact angle $\theta\leq\pi/2$ with $\partial\Omega\cap\partial M$. Suppose that the mean curvature vector of $\Sigma$ points everywhere into $\Omega$, and $H \geq (n-1)\sqrt{k}$. Then $\Sigma$ is connected, and the map
\[
        i_*: \pi_1(\Sigma) \rightarrow \pi_1(\bar{\Omega})
\]
induced by the inclusion is surjective.
\end{theorem}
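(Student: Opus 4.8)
The plan is to argue exactly as in Theorem \ref{theorem:negative}, using the strict convexity of $\partial M$ and the contact angle condition $\theta\le\pi/2$ to control the behaviour of the distance-realizing geodesic near $\partial M$.

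\emph{Connectedness of $\Sigma$.} Suppose $\Sigma$ were disconnected and, among all pairs of components of $\Sigma$, let $\Sigma_1,\Sigma_2$ realize the minimal distance $l>0$ measured in $\bar\Omega$; let $\gamma:[-l/2,l/2]\to\bar\Omega$ be a unit-speed minimizing geodesic with $\gamma(-l/2)=p\in\Sigma_1$ and $\gamma(l/2)=q\in\Sigma_2$. Because $\Sigma_1,\Sigma_2$ is the closest pair, the interior of $\gamma$ meets no component of $\Sigma$; because $\partial M$ is strictly convex, $\gamma$ meets $\partial M$ at most at its endpoints $p$ and $q$ (a minimizing geodesic tangent to a strictly convex boundary at an interior parameter can be strictly shortened), exactly as in \cite{FL}. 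It remains to check that $\gamma$ meets $\Sigma_1$ and $\Sigma_2$ orthogonally. If $p\in\mathrm{int}\,\Sigma_1$ this is the first variation. If $p\in\partial\Sigma_1\subset\partial M$, the first variation applied to displacements of $p$ inside $\Sigma_1$ forces $\gamma'(-l/2)$ to be orthogonal to $\partial\Sigma_1$ with a nonnegative outward-conormal component, while $\gamma'(-l/2)$ must also lie in the tangent wedge of $\bar\Omega$ at $p$, whose dihedral angle is $\theta\le\pi/2$; an elementary comparison of these two constraints (this is where $\theta\le\pi/2$ enters) shows $\gamma'(-l/2)$ is the inward unit normal $\nu_1$ of $\Sigma_1$, i.e., $\gamma\perp\Sigma_1$ (and, when $\theta=\pi/2$, $\gamma$ is tangent to $\partial M$ at $p$, which is harmless by strict convexity). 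Likewise $\gamma\perp\Sigma_2$ at $q$. Now choose parallel orthonormal fields $E_1,\dots,E_{n-1}$ along $\gamma$ with $E_i(p)$ tangent to $\Sigma_1$ (so $E_i(q)$ is tangent to $\Sigma_2$), vary the endpoints along geodesics of $\Sigma_1$ and $\Sigma_2$ (which stay in $M$, so $\partial M$ contributes nothing), and set $V_i=\varphi E_i$ with $\varphi(t)=\cosh(\sqrt k\,t)/\cosh(\sqrt k\,l/2)$, so $\varphi''=k\varphi$ and $\varphi(\pm l/2)=1$. Summing the second variations and computing exactly as in Theorem \ref{theorem:negative} — integrating by parts and using $\mathrm{Ric}\ge-(n-1)k$, $\varphi''=k\varphi$, and $H\ge(n-1)\sqrt k$ — yields
\[
0\le\sum_{i=1}^{n-1}L_{V_i}''(0)\le -2(n-1)\sqrt k+2(n-1)\sqrt k\,\tanh(\sqrt k\,l/2)<0,
\]
a contradiction. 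Hence $\Sigma$ is connected.

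\emph{Surjectivity of $i_*$.} I would then repeat the universal-cover argument of Theorem \ref{heegaard} b). Let $\pi:\tilde\Omega\to\Omega$ be the universal cover; the lifted metric still satisfies $\mathrm{Ric}\ge-(n-1)k$, $\pi^{-1}(\partial M)$ is still strictly convex, and the components of $\pi^{-1}(\Sigma)$ still have mean curvature vector pointing to the $\tilde\Omega$-side with $H\ge(n-1)\sqrt k$ and still make contact angle $\theta\le\pi/2$ with $\pi^{-1}(\partial M)$. If $\pi^{-1}(\Sigma)$ were disconnected, one selects — by Lawson's compactness argument \cite{L}, valid because $\tilde\Omega$ covers the compact $\Omega$ — two of its components together with a minimizing geodesic joining them, and the second variation computation above gives a contradiction; hence $\pi^{-1}(\Sigma)$ is connected. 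Finally, a loop $\ell$ in $\bar\Omega$ based at $p\in\Sigma$ lifts to an arc $\tilde\ell$ joining two points of $\pi^{-1}(p)$; joining them by an arc $\hat\ell$ inside the connected set $\pi^{-1}(\Sigma)$ and using that $\tilde\Omega$ is simply connected, $\pi(\hat\ell)$ is a loop in $\Sigma$ homotopic in $\bar\Omega$ to $\ell$, so $i_*:\pi_1(\Sigma)\to\pi_1(\bar\Omega)$ is surjective.

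The main obstacle I expect is the endpoint analysis: showing that the distance-realizing geodesic between two components of $\Sigma$ (or of $\pi^{-1}(\Sigma)$) avoids $\partial M$ in its interior and meets the two components orthogonally even when its feet lie on $\partial\Sigma$. This is precisely where the hypotheses that $\partial M$ is strictly convex and $\theta\le\pi/2$ are used — the latter in the wedge comparison that pins down $\gamma'(-l/2)=\nu_1$ — and the examples in the remarks after Theorem \ref{theorem:negative} show that the mean-curvature threshold $(n-1)\sqrt k$ is sharp. Once this is in place, the remainder is a direct transcription of the proofs of Theorems \ref{theorem:negative} and \ref{heegaard} b).
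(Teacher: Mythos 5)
Your overall strategy — use the strict convexity of $\partial M$ and the contact angle condition to produce a distance-realizing geodesic between two putative components of $\Sigma$, then run the same second-variation argument as in Theorem \ref{theorem:negative}, and finally repeat in the universal cover of $\Omega$ — is exactly the paper's approach. The one place where your write-up deviates, and where it is slightly muddled, is the treatment of the endpoints of $\gamma$. The paper states the relevant geometric fact cleanly: because $\partial M$ is convex and the contact angle is $\leq\pi/2$, for every $x\in\Omega\setminus\Sigma_i$ the distance $d_i(x)$ is realized by a geodesic whose foot is an \emph{interior} point of $\Sigma_i$; hence the boundary case never arises and orthogonality is immediate. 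Your wedge comparison at a hypothetical foot $p\in\partial\Sigma_1$ is the right geometric idea, but the conclusion you draw from it is off: when $\theta<\pi/2$ the inward normal $\nu_1$ lies outside the tangent wedge of $\bar\Omega$ at $p$, so no direction in the wedge satisfies the first-variation constraints — this is a contradiction, not orthogonality; and when $\theta=\pi/2$ the only admissible direction is $\nu_1$, which is tangent to $\partial M$, and a geodesic of $M$ tangent to a strictly convex boundary immediately leaves $M$ — again a contradiction, not something ``harmless.'' In both subcases the correct conclusion is that $p$ cannot lie on $\partial\Sigma_1$, i.e.\ precisely the paper's assertion that the foot is interior. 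This also matters for the subsequent step, since your endpoint variations along geodesics of $\Sigma_1,\Sigma_2$ only make sense when the feet are interior; with the corrected conclusion that gap closes, and the rest of your argument (second variation with $\varphi(t)=\cosh(\sqrt k\,t)/\cosh(\sqrt k\,l/2)$, and the universal-cover/connectedness argument for surjectivity) matches the paper.
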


\begin{proof}
Suppose $\Sigma$ is not connected. Let $\Sigma_1$ and $\Sigma_2$ be two distinct connected components of $\Sigma$. Let $d_1$ and $d_2$ be the distance functions on $\Omega$ from $\Sigma_1$ and $\Sigma_2$ respectively. Since $\partial M$ is convex and $\Sigma_i$, $i=1,\, 2$,  makes a contact angle $\leq \pi/2$ with $\partial \Omega \cap \partial M$, for any point $x$ in $\Omega \setminus \Sigma_i$, $d_i(x)$ is realized by a geodesic in $\Omega$ from $x$ to an interior point $y$ on $\Sigma_i$. Then there exists a geodesic $\gamma$ in $\Omega$ from some interior point $p \in \Sigma_1$ to some interior point $q \in \Sigma_2$, that realizes the distance from $\Sigma_1$ to $\Sigma_2$, and meets $\Sigma_1$ and $\Sigma_2$ orthogonally. But as in the proof of Theorem \ref{theorem:negative} the Ricci curvature lower bound and assumption on the mean curvature of $\Sigma_1$ and $\Sigma_2$ imply that $\gamma$ is unstable, a contradiction. Therefore $\Sigma$ is connected.

Let $\tilde{\Omega}$ be the universal cover of $\Omega$ with projection map $\pi : \tilde{\Omega} \rightarrow \Omega$. By the same argument as above, $\partial \tilde{\Omega} \setminus \pi^{-1} (\partial M)$ must be connected, and we easily get the surjectivity of $i_*: \pi_1(\Sigma) \rightarrow \pi_1(\bar{\Omega})$.
\end{proof}

\begin{corollary}
Under the assumptions of Theorem \ref{theorem:convex-negative}, if $n=3$ then $\Omega$ is a handlebody.
\end{corollary}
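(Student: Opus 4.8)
The plan is to feed the topological conclusion of Theorem \ref{theorem:convex-negative} into the Loop Theorem / Dehn's Lemma machinery, exactly as in the proofs of Theorem \ref{heegaard} b) and Theorem \ref{convex-handlebodies}. Theorem \ref{theorem:convex-negative} already supplies all the geometry: under the stated hypotheses $\Sigma$ is connected and $i_*:\pi_1(\Sigma)\to\pi_1(\bar{\Omega})$ is surjective. What remains is purely $3$-dimensional topology, once one disposes of the mixed boundary.

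First I would organize the boundary of $\bar{\Omega}$. Write $\partial\bar{\Omega}=\Sigma\cup A$, where $A:=\partial\Omega\cap\partial M$ is a compact surface lying in $\partial M$ and meeting $\Sigma$ along the corner $\partial\Sigma=\partial A$. Rounding the corner turns $\bar{\Omega}$ into a smooth compact orientable $3$-manifold $W$ whose boundary is the closed surface $\hat{\Sigma}\cong\Sigma\cup_{\partial\Sigma}A$. Since $\Sigma$ is connected and every component of $A$ abuts $\partial\Sigma\subset\Sigma$, the surface $\hat{\Sigma}$ is connected. Moreover the inclusion $\Sigma\hookrightarrow W$ is homotopic to $\Sigma\hookrightarrow\hat{\Sigma}\hookrightarrow W$, so the surjectivity of $\pi_1(\Sigma)\to\pi_1(W)$ furnished by Theorem \ref{theorem:convex-negative} forces $\pi_1(\partial W)=\pi_1(\hat{\Sigma})\to\pi_1(W)$ to be surjective as well; equivalently $\pi_1(W,\partial W)=0$.

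Finally, exactly as in the last paragraph of the proof of Theorem \ref{heegaard} b) and the proof of Theorem \ref{convex-handlebodies} (citing \cite{Pa}, the Loop Theorem and Dehn's Lemma), a compact orientable $3$-manifold $W$ with connected boundary and $\pi_1(W,\partial W)=0$ is a handlebody: one compresses $\partial W$ along embedded essential disks, supplied by the Loop Theorem as long as the boundary is compressible, until the boundary becomes incompressible, at which stage $\pi_1(\partial W)\hookrightarrow\pi_1(W)$ together with surjectivity forces $W=B^3$, and reattaching the $1$-handles exhibits $W$ as a handlebody. Applying this to $W=\bar{\Omega}$ gives the corollary. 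The only step requiring genuine care is the corner bookkeeping of the previous paragraph — checking that $\hat{\Sigma}$ is connected and that $\pi_1$-surjectivity passes from $\Sigma$ to the full smoothed boundary $\hat{\Sigma}$; after that, no new geometry enters and the argument is identical to ones already used in the paper.
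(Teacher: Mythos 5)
Your argument is exactly the one the paper intends (and applies verbatim in Theorem \ref{convex-handlebodies} and the final paragraph of Theorem \ref{heegaard}): surjectivity of $i_*:\pi_1(\Sigma)\to\pi_1(\bar\Omega)$ forces $\pi_1(\bar\Omega,\partial\bar\Omega)=0$, and then \cite{Pa} (Loop Theorem and Dehn's Lemma) yields the handlebody conclusion. The corner-rounding and the check that surjectivity passes from $\Sigma$ to the full boundary $\Sigma\cup_{\partial\Sigma}A$ are details the paper leaves implicit, and you have supplied them correctly — though the connectedness of $\Sigma\cup A$ does rest on the tacit assumption that no closed component of $\partial M$ lies entirely inside $\bar\Omega$ away from $\Sigma$, which is the intended setting.
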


\section{nonexistence}

As an application of the surjectivity of $i_*:\pi_1(\Sigma)\rightarrow\pi_1(M)$ Frankel showed that $\mathbb S^n$ cannot be minimally embedded in $\mathbb P^{n+1}$. In this section we further utilize the surjectivity of $i_*$ and prove nonexistence of some minimal embeddings in $T^{n+1}$.

Meeks \cite{Mk} proved that a compact surface of genus 2 cannot be minimally immersed in a flat 3-torus $T^3$. He used the fact that the Gauss map of a minimal surface $\Sigma\subset T^3$ into $\mathbb S^2$ has degree one. A  theorem of a similar nature can be proved in higher dimension by using the surjectivity of  $i_*$.

\begin{theorem}\label{generator}
Let $N$ be a compact orientable $n$-dimensional manifold with the number of generators of $\pi_1(N)=k$ and let $T^{n+1}$ be the $(n+1)$-dimensional flat torus. \\
 {\rm a)} If $k<n$, $N$ cannot be minimally embedded in $T^{n+1}$. \\
 {\rm b)} If $k=n$ and $N$ is minimally embedded in $T^{n+1}$, then $N$ is a flat $ T^n$.\\
 {\rm c)} If $k>n$ and $N$ is minimally embedded in $T^{n+1}$, then $N$ is separating and the number of generators of $\pi_1(D_j)$ must be bigger than $n$ for $j=1,2$ {\rm (}$D_1\cup D_2=T^{n+1}\setminus N${\rm )}.
\end{theorem}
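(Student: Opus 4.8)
The plan is to combine the dichotomy of Theorem \ref{heegaard} with the observation that the flat torus $T^{n+1}$ has $\pi_1(T^{n+1}) = \mathbb{Z}^{n+1}$, which requires exactly $n+1$ generators and, crucially, cannot be generated by fewer. First I would treat the nonseparating case: if $N$ is nonseparating in $T^{n+1}$, then by Theorem \ref{heegaard} a) the complement $T^{n+1}\setminus N$ is isometric to $N\times(a,b)$, so $T^{n+1}$ is a mapping torus over $N$ with monodromy an isometry $\phi$. This forces $\pi_1(T^{n+1})$ to fit into an exact sequence $\pi_1(N)\to\pi_1(T^{n+1})\to\mathbb{Z}\to 1$, so $\pi_1(T^{n+1})$ is generated by (a lift of) the image of $\pi_1(N)$ together with one more element, i.e.\ by at most $k+1$ generators. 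Hence $k+1\geq n+1$, that is $k\geq n$. Moreover when $k=n$ equality holds throughout, which (since $\mathbb{Z}^{n+1}$ is abelian and torsion-free) forces the extension to split as $\mathbb{Z}^n\times\mathbb{Z}$ with $\pi_1(N)\cong\mathbb{Z}^n$; combined with $N$ being totally geodesic and flat (it sits inside the splitting $N\times(a,b)$ and the ambient metric is flat), one concludes $N$ is a flat $T^n$. This already handles b) and the nonseparating subcase of a).

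Next I would handle the separating case. If $N$ separates $T^{n+1}$ into $D_1, D_2$, then Theorem \ref{heegaard} b) gives that $i_*:\pi_1(N)\to\pi_1(T^{n+1})$ is surjective. A group that surjects onto $\mathbb{Z}^{n+1}$ must itself have at least $n+1$ generators (the minimal number of generators does not increase under quotients), so $k\geq n+1>n$; in particular the case $k<n$ is impossible and the case $k=n$ cannot be separating, which is consistent with b). This disposes of a) entirely. For c), when $k>n$ and $N$ is minimally embedded, the nonseparating case would force $k\geq n$ with $N\cong T^n$ and $k=n$ by the analysis above, contradicting $k>n$; hence $N$ must be separating. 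Then Theorem \ref{heegaard} b) gives that $i_*:\pi_1(D_j)\to\pi_1(T^{n+1})$ is surjective for $j=1,2$, so $\pi_1(D_j)$ surjects onto $\mathbb{Z}^{n+1}$ and therefore needs at least $n+1>n$ generators.

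The main obstacle I anticipate is the rigidity step in b): upgrading the algebraic equality ``$\pi_1(N)$ surjects onto $\mathbb{Z}^n$ and has $n$ generators'' to the geometric conclusion ``$N$ is a flat torus.'' The algebra alone gives $\pi_1(N)\cong\mathbb{Z}^n$. To get that $N$ is actually $T^n$ with a flat metric one should use that in case a) of Theorem \ref{heegaard}, $N$ is totally geodesic in $T^{n+1}$, and a totally geodesic hypersurface in a flat manifold is itself flat; then a compact flat $n$-manifold with $\pi_1\cong\mathbb{Z}^n$ is diffeomorphic (indeed, by Bieberbach, isometric up to scaling) to $T^n$. One small point to be careful about is the orientability/two-sidedness hypotheses needed to invoke Theorem \ref{heegaard}: $N$ is given orientable and $T^{n+1}$ is orientable, so $N$ is two-sided, and Theorem \ref{heegaard} applies directly. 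A second minor point is that in the nonseparating bound I should make sure the mapping-torus structure yields a genuine generating set of size $k+1$ for $\pi_1(T^{n+1})$, which follows from the standard presentation of $\pi_1$ of a mapping torus as an HNN extension (or simply from the van Kampen computation on $N\times[0,a]$ glued along $\phi$).
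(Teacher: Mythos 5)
Your proof is correct and follows the same approach as the paper: apply the dichotomy of Theorem \ref{heegaard}, use that $\pi_1(T^{n+1})=\mathbb{Z}^{n+1}$ cannot be generated by fewer than $n+1$ elements to handle the separating/surjectivity side, and use total geodesy in a flat ambient manifold to identify $N$ with a flat $T^n$ in the nonseparating case. The mapping-torus generator count and the explicit Bieberbach step you add to justify that $N$ is a flat $T^n$ are fleshed-out versions of facts the paper states without elaboration (a compact totally geodesic hypersurface of $T^{n+1}$ is the quotient of a hyperplane in $\mathbb{R}^{n+1}$ by a rank-$n$ lattice), not a different strategy.
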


\begin{proof}
Let $N^n$ be an embedded minimal submanifold in $T^{n+1}$ with $k\leq n$. Then the map $i_*:\pi_1(N)\rightarrow\pi_1(T^{n+1})$ is not surjective. Hence from Theorem \ref{heegaard} we conclude that $N$ is nonseparating and totally geodesic in $T^{n+1}$. Hence $N$ is a flat $ T^n$ and $k=n$. Therefore $N$ cannot be minimally embedded in $T^{n+1}$ in case $k<n$. If $k>n$, then $N$ must be separating and c) follows from the surjectivity of $i_*: \pi_1(D_j)\rightarrow \pi_1(T^{n+1})$ in Theorem \ref{heegaard} b).
\end{proof}

\begin{remark} In case $n=2$, Theorem \ref{generator} c) gives a new proof of the Meeks theorem mentioned above.
\end{remark}

Let $\Gamma_k\subset\mathbb R^{2}$ be the union of $k$ loops $\gamma_1,\ldots,\gamma_k$ in $\mathbb R^{2}$ with $\gamma_i\cap\gamma_j=\{p\}$ for every pair $1\leq i,j\leq k$ and let $\Gamma_k^{n+1}$ be the $\varepsilon$-tubular neighborhood of $\Gamma_k$ in $\mathbb R^{n+1}$. $\Gamma_k^{n+1}$ can be seen as a high-dimensional handlebody in $\mathbb R^{n+1}$. Note that $\partial\Gamma_k^{n+1}$ is diffeomorphic to $\#_k(\mathbb S^{n-1}\times\mathbb S^1)$, the connected sum of $k$ copies of $\mathbb S^{n-1}\times\mathbb S^1$, and that $\pi_1(\partial\Gamma_k^{n+1})$ has $k$ generators when $n\geq3$. Since $\partial\Gamma_n^{n+1}$ is not diffeomorphic to $T^n$, Theorem \ref{generator} implies that $\partial\Gamma_k^{n+1}$ cannot be minimally embedded in $T^{n+1}$ for any $k=1,\ldots,n$.

Schwarz's $P$-surface is a minimal surface of genus 3 in the cubic torus $T^3$. One can generalize this surface to higher dimension as follows. $T^{n+1}$ has a 1-dimensional skeleton $L_{n+1}$ which is homeomorphic to $\Gamma_{n+1}$. There also exists its dual $L_{n+1}'$ that is a parallel translation of $L_{n+1}$. One can foliate  $T^{n+1}\setminus(L_{n+1}\cup L_{n+1}')$ by 1-parameter family of $n$-dimensional hypersurfaces which are diffeomorphic to $\partial\Gamma_{n+1}^{n+1}$ and sweeping out from $L_{n+1}$ to $L_{n+1}'$. Applying the minimax argument, one could find a minimal hypersurface $\Sigma$ from this family of hypersurfaces \cite{CH}. $\Sigma$ should be diffeomorphic to $\partial\Gamma_{n+1}^{n+1}$ and $\pi_1(\Sigma)$ should have $n+1$ generators. Therefore the upper bound $n$ in Theorem \ref{generator} is sharp.


\begin{thebibliography}{11111}
    \setlength{\baselineskip}{0.9\baselineskip}
    \bibitem[C]{C}
    E. Calabi, {\em An extension of E. Hopf's maximum principle with an application to Riemannian geometry}, Duke Math. J. {\bf 25} (1958), 45-56.
    \bibitem[CaGa]{CaGa}
    M. Cai and G. Galloway, {\em Rigidity of area minimizing tori in 3-manifolds of nonnegative scalar curvature}, Comm. Anal. Geom. {\bf 8} (2000), no. 3, 565-573.
    \bibitem[CG]{CG}
    J. Cheeger and D. Gromoll, {\it The splitting theorem for manifolds of nonnegative Ricci curvature}, J. Differential Geom. {\bf 6} (1971/72), 119-128.
    \bibitem[CH]{CH}
    J. Choe and J. Hoppe, {\em Higher dimensional Schwarz's surfaces and Scherk's surfaces}, in preparation.
    \bibitem[CK]{CK}
    C. Croke, B. Kleiner, {\em A warped product splitting theorem}, Duke Math. J. {\bf 67} (1992), no. 3, 571-574.
    \bibitem[F]{F}
    T. Frankel, {\it On the fundamental group of a compact minimal submanifold}, Ann. of Math.    {\bf 83} (1966), 68-73.
    \bibitem[FL]{FL}
    A. Fraser and M. Li, {\it Compactness of the space of embedded minimal surfaces with free boundary in three-manifolds with non-negative Ricci curvature and convex boundary}, preprint.
    \bibitem[Ha]{Ha}
    J. Hadamard, {\it Sur certaines propri\'{e}t\'{e}s des trajectoires en Dynamique}, J. Math. pures appel. Serie 5, t. 3 (1897), 331-387.
    \bibitem[He]{He}
    R. Hermann, {\it Focal points of closed submanifolds of Riemannian spaces}, Proc. Ned. Akad. Wet., {\bf 66} (1963), 613-628.
    \bibitem[HM]{HM} M. Hirsch and B. Mazur, {\it Smoothings of piecewise linear manifolds}, Annals of Mathematics Studies, No. 80, {Princeton University Press, Princeton, N.J.} 1974 134pp.
    \bibitem[L]{L}
    H.B. Lawson, {\it The unknottedness of minimal embeddings}, Invent. Math. {\bf 11} (1970), 183-187.
    \bibitem[Mk]{Mk}
    W.H. Meeks, {\it The conformal structure and geometry of triply periodic minimal surfaces in $\mathbb R^3$}, Bull. Amer. Math. Soc. {\bf 83} (1977), 134-136.
    \bibitem[MR]{MR}
    W.H. Meeks and H. Rosenberg, {\it The theory of minimal surfaces in $M\times\mathbb R$}, Comment. Math. Helv. {\bf 80} (2005), 811-858.
    \bibitem[M]{M}
    S. B. Meyers, {\it Riemannian manifolds with positive Ricci curvature}, Duke Math. J., {\bf 8} (1941), 401-404.
    \bibitem[Pa]{Pa}
    C.D. Papakyriakopoulos, {\it On solid tori}, Proc. London Math. Soc. {\bf 3} (1957), 281-299.
    \bibitem[Pe]{Pe}
    P. Petersen, {\em Riemannian geometry} Second edition, Graduate Texts in Mathematics, {\bf 171}, Springer,
New York, 2006.
    \bibitem[PW]{PW}
    P. Petersen and F. Wilhelm, {\it On Frankel's theorem}, Canad. Math. Bull. {\bf 46} (2003), 130-139.
    \bibitem[R]{R} R. Reilly, {\it Applications of the Hessian operator in a Riemannian manifold}, Indiana Univ. Math. J. {\bf 26} (1977), 459-472.
    \bibitem[S]{S}
    R. Schoen, {\em A lower bound for the first eigenvalue of a negatively curved manifold},
J. Differential Geom. {\bf 17} (1982), no. 2, 233-238.
    \bibitem[W]{W}
    F. Waldhausen, {\it Heegaard-Zerlegungen der 3-Sph\"{a}re}, Topology {\bf 7} (1968), 195-203.
    \bibitem[Wy]{Wy}
    W. Wylie, {\em A warped product version of the Cheeger-Gromoll splitting theorem}, arXiv:1506.03800.

\end{thebibliography}
\end{document}